\newtheorem{theorem}{Theorem}
\newtheorem{lemma}[theorem]{Lemma}
\newtheorem{corollary}[theorem]{Corollary}
\newtheorem{remark}[theorem]{Remark}
\newtheorem{conj}[theorem]{Conjecture}
\newcommand{\cC}{{\mathcal C}}
\newcommand{\cD}{{\mathcal D}}
\newcommand{\cJ}{{\mathcal J}}
\newcommand{\cI}{{\mathcal I}}
\newcommand{\cK}{{\mathcal K}}
\newcommand{\cT}{{\mathcal T}}
\newcommand{\cTp}{{\mathcal T^+}}
\newcommand{\fS}{{\mathfrak S}}
\newcommand{\oM}{{\overline{M}}}
\newcommand\Bi{\mathrm{Bi}}
\newcommand\E{{\mathbb E}}
\newcommand\Var{\operatorname{Var}} 
\renewcommand\Pr{{\mathbb P}}
\newcommand\NN{\mathbb{N}}
\newcommand\floor[1]{\lfloor {#1} \rfloor}
\newcommand\bigceil[1]{\big\lceil {#1} \big\rceil}
\newcommand\ceil[1]{\lceil {#1} \rceil}
\newcommand\Gnp{\ensuremath{G_{n,p}}}
\renewcommand{\epsilon}{\varepsilon}
\newcommand{\eps}{\varepsilon}
\newenvironment{romenumerate}[1][-5pt]{% optional argument changes indentation
\addtolength{\leftmargini}{#1}\begin{enumerate}% gives (i), (ii) etc.
\itemsep0pt \parskip0pt \parsep0pt% 
 }{\end{enumerate}}
\long\def\symbolfootnote[#1]#2{\begingroup
\def\thefootnote{\fnsymbol{footnote}}\footnote[#1]{#2}\endgroup}
\renewcommand{\emptyset}{\varnothing} % \text{\O}
\newcommand{\indic}[1]{\mathbbm{1}_{\{{#1}\}}}
\let\OLDthebibliography\thebibliography
\renewcommand\thebibliography[1]{
  \OLDthebibliography{#1}
  \setlength{\parskip}{0pt}
  \setlength{\itemsep}{0pt plus 0.3ex}
}
\begin{document}

\title{Upper tail bounds for Stars} 
\author{Matas {\v{S}}ileikis\thanks{Department of Theoretical Computer Science, Institute of Computer Science of the Czech Academy of Sciences, 182~07~Prague, Czech Republic. 
E-mail: {\tt matas.sileikis@gmail.com}. With institutional support RVO:67985807. Research supported by the Czech Science Foundation, grant number GJ16-07822Y.} \ 
 and Lutz Warnke\thanks{School of Mathematics, Georgia Institute of Technology, Atlanta GA~30332, USA.
E-mail: {\tt warnke@math.gatech.edu}. 
Research partially supported by NSF Grant DMS-1703516 and a Sloan Research Fellowship. 
Part of the work was done while the author was a member of the Department of Pure Mathematics and Mathematical Statistics, University of Cambridge.
}}
\date{29 January 2019} 
\maketitle

\begin{abstract}
For $r \ge 2$, let~$X$ be the number of $r$-armed stars~$K_{1,r}$ in the binomial random graph~$\Gnp$. 
We study the upper tail $\Pr(X \ge (1+\eps)\E X)$, and establish exponential bounds which are best possible up to constant factors in the exponent 
(for the special case of stars~$K_{1,r}$ this solves a problem of Janson and Ruci{\'n}ski, and confirms a conjecture by DeMarco and Kahn). 
In contrast to the widely accepted standard for the upper tail problem, we do not restrict our attention to constant~$\eps$, but also allow for $\eps \ge n^{-\alpha}$ deviations. 
\end{abstract}

\section{Introduction}
The study of (the distribution of) small subgraphs in the binomial random graph~$\Gnp$ is one of the most fundamental and influential problems in the theory of random graphs.  
Starting with the seminal work of Erd{\H o}s and R\'enyi~\cite{ER1960} from~1960, the early results for the number~$X_H$ of copies of~$H$ in $\Gnp$ concerned the threshold of appearance (i.e., when~$\Pr(X_H>0) \to 1$) and the range of edge-probabilities~$p$ for which~$X_H$ is asymptotically~normal; % (after suitable rescaling); 
these basic features were eventually resolved in the~1980s by Bollob{\'a}s~\cite{SGTR} and Ruci{\'n}ski~\cite{SGN}. 
Later the focus changed to finer details of the distribution of $X_H$, and the \emph{lower tail} $\Pr(X_H \le (1-\eps)\E X_H)$ was studied intensively in the late 1980s (often for the special case $\eps=1$). 
This led to the discovery of  Janson's inequality~\cite{Janson,JansonJLR,RiordanWarnke2012J}, which gives exponential bounds for $\Pr(X_H \le (1-\eps)\E X_H)$ that are best possible up to constant factors in the exponent (cf.\ the recent work of Janson and Warnke~\cite{JW}).

Since the early~1990s the `infamous' \emph{upper tail} $\Pr(X_H \ge (1+\eps)\E X_H)$ has remained an important challenge, providing a well-known testbed for %(the development of) new 
concentration inequalities (see, e.g.,~\cite{UT}). 
After polynomial bounds around~1990 by Spencer~\cite{Spencer1990} and exponential bounds in the late~1990s 
via the Kim--Vu polynomial concentration method~\cite{KimVu2000,Vu2001}, 
in~2002 Janson, Oleszkiewicz and Ruci{\'n}ski~\cite{UTSG} obtained a breakthrough: 
via a moment based method they obtained exponential estimates for~$\Pr(X_H \ge (1+\eps)\E X_H)$ that, for constant~$\eps$, are best possible up to logarithmic factors in the exponent (see also~\cite{UTHG,UTAP} for extensions to random hypergraphs, and arithmetic progressions in random subsets of integers).  
The \emph{upper tail problem} of closing the aforementioned logarithmic gap %in the approach of Janson et al.\ 
has remained open during the last decade, and only recently this has been settled for 
cliques~$K_r$ by DeMarco and Kahn~\cite{K3TailDK,KkTailDK} (see also Chatterjee~\cite{K3TailCh} for~$r=3$), and for arithmetic progressions by Warnke~\cite{APUT}. 
Modern large deviation theory also gives partial results~\cite{CD2014,LZ2014,BGLZ17,DC2018} for large edge-probabilities~$p \ge n^{-\delta_H}$ (this restriction sidesteps some difficulties of the upper tail problem).

\enlargethispage{\baselineskip}

In this paper we solve the upper tail problem for $r$-armed stars~$K_{1,r}$, 
and as a conceptual novelty we will also allow~$\eps$ to depend on~$n$ (i.e., do not restrict our attention to constant~$\eps$, as usual). 
The casual reader might suspect that tail estimates for $r$-armed stars are essentially trivial, but this is only true for~$r=1$ (where $X_{K_{1,1}} = |E(\Gnp)|$ since $K_{1,1}=K_2$). 
To put this into context, Janson, Oleszkiewicz and Ruci\'nski~\cite{UTSG} proved that for $r$-regular graphs $H$, such as cliques~$K_{r+1}$, the upper tail satisfies 
\begin{equation*}\label{eq:H:JOR}
p^{O_{H,\eps}(n^2p^r)} \le \Pr(X_{H} \ge (1+\eps) \E X_{H}) \le e^{-\Omega_{H,\eps}(n^2p^r)} ,
\end{equation*}
where the subscripts in~$O_{H,\eps}$ and~$\Omega_{H,\eps}$ indicate that the implicit constants may depend on~$H$ and~$\eps$. 
They also highlighted $K_{1,r}$ (with $r \ge 2$) as key example where 
the form of the exponent %~$-\log \Pr(X_{H} \ge (1+\eps) \E X_{H})$ 
is more complicated, %, 
i.e, has different expressions for different ranges of $p$. 
This surprising intricacy is further manifested by the history of the infamous upper tail problem. 
Namely, Vu~\cite{Vu2001} argued in 2000 that his general results were %(for so-called balanced graphs $H$) were 
essentially unimprovable due to $r$-armed stars, 
for which he obtained  bounds of the~form 
\begin{equation}\label{eq:K1r:VU}
p^{O_{r,\eps}(n^{1+1/r}p)} \le \Pr(X_{K_{1,r}} \ge (1+\eps) \E X_{K_{1,r}}) \le e^{-\Omega_{r,\eps}(n^{1+1/r}p)} .
\end{equation}
However, Janson, Oleszkiewicz and Rucinski~\cite{UTSG} later discovered that the upper tail behavior of $K_{1,r}$ is more delicate 
(the lower bound in \eqref{eq:K1r:VU} is not always correct), 
and obtained bounds of the more involved form 
\begin{equation}\label{eq:K1r:JOR}
p^{O_{r,\eps}(\max\{n^{1+1/r}p, \: n^2p^r\})} \le \Pr(X_{K_{1,r}} \ge (1+\eps) \E X_{K_{1,r}}) \le e^{-\Omega_{r,\eps}(\max\{n^{1+1/r}p, \: n^2p^r\})} .
\end{equation}
In words, for stars the form of the upper tail changes around~$p \approx n^{-1/r}$, which is an intriguing phenomenon (that does not occur for cliques). 
In fact, a recent conjecture of DeMarco and Kahn~\cite{KkTailDK,SWUT} for general~$H$ suggests that in~\eqref{eq:K1r:JOR} the `correct' exponent involves yet another term~$\mu:= \E X_{K_{1,r}} = \Theta_r(n^{r+1}p^{r})$, see~\eqref{eq:thm:Krp}. 
However, despite some partial results~\cite{Matas2012,Matas2012phd,LW16}, 
the quest for matching bounds in~\eqref{eq:K1r:VU}--\eqref{eq:K1r:JOR} remained open.

\subsection{Main results} %  
Our first basic result settles the upper tail problem of $r$-armed stars for constant~$\eps$, 
by closing the existing~$\log(1/p)$ gap in the exponent for \emph{all}~$p \in (0,1]$. 
In particular, \eqref{eq:thm:Krp} below confirms\footnote{Using Corollary~1.8 in~\cite{UTSG} and the discussion of~Remark 8.3 in~\cite{UTSG} 
it is not difficult to check that the special case~$H=K_{1,r}$ of Conjecture~10.1 in~\cite{KkTailDK} indeed reduces to~\eqref{eq:thm:Krp} with~$\eps=1$; see also equation~(4.27) in~\cite{Matas2012phd} and Remark~2 in~\cite{Matas2012}.}
 Conjecture~10.1 of DeMarco and Kahn~\cite{KkTailDK} in the special case~$H=K_{1,r}$. 
For subgraph counts this is the first example of a sharp upper tail estimate where, for constant~$\eps$, the form of the exponent undergoes \emph{multiple phases} % transitions. 
(i.e., has more more than two different expressions for different ranges of~$p$). 
%Note that the expectated number of~$K_{1,r}$ in~$G_{n,p}$ satisfies~$\mu=\Theta_r(n^{r+1}p^{r})$. 
%
\begin{theorem}[Upper tail problem for constant~$\eps$]\label{thm:Krp}
Given~$r \geq 2$, let~$X=X_{r,n,p}$ be the number of copies of~$K_{1,r}$ in~$\Gnp$. 
Set~$\mu:=\E X$. 
For~$p\in (0,1]$ and~$\eps>0$ satisfying~$1 \le (1+\eps)\mu \leq X_{r,n,1}$ we have 
\begin{equation}\label{eq:thm:Krp}
	-\log \Pr(X \geq (1+\eps)\mu) = \Theta_{r,\eps}\bigl(\Phi\bigr) \quad \text{ with } \quad \Phi := \min\Bigl\{\mu, \; \max\bigl\{\mu^{1/r},  \mu/n^{r-1}\bigr\} \log (1/p)\Bigr\} .
\end{equation}
\end{theorem}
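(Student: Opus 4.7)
The theorem has matching upper and lower bounds for $-\log \Pr(X \ge (1+\eps)\mu)$, and it is helpful to interpret the three quantities appearing in $\Phi = \min\{\mu, \max\{\mu^{1/r}, \mu/n^{r-1}\} \log(1/p)\}$ as costs of three different deviation strategies, using the identity $X = \sum_v \binom{D_v}{r}$ with $D_v = \deg_{\Gnp}(v)$. These are (a) a \emph{single-hub} strategy, forcing one vertex to have abnormally high degree $d \approx \mu^{1/r}$ at probabilistic cost $\approx p^d$; (b) a \emph{many-hub} strategy, forcing $k \approx \mu/n^r$ vertices to have nearly maximal degree, at cost $\approx p^{kn}$; and (c) a trivial bound that matters only when $\mu$ is small enough (roughly $\mu \lesssim \log(1/p)^{r/(r-1)}$) that $X$ is Poisson-like and classical anti-concentration takes over.

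\textbf{Lower bound construction.} To prove $\Pr(X \ge (1+\eps)\mu) \ge e^{-O_{r,\eps}(\Phi)}$ we use explicit favourable events, optimizing over the three strategies above. For (a) we pin a vertex $v$ and condition on it being joined to $d := \lceil (c_r\eps\mu)^{1/r}\rceil$ prescribed other vertices, which alone gives $\binom{D_v}{r} \ge (1+\eps)\mu$ at probability $\ge p^d = e^{-O(\mu^{1/r}\log(1/p))}$. For (b), when $d > n-1$, we pick $k := \lceil c_r\eps\mu/n^r\rceil$ vertices and condition on each being adjacent to all other vertices, giving $X \ge k\binom{n-1}{r} \ge (1+\eps)\mu$ at cost $p^{k(n-1)} = e^{-O((\mu/n^{r-1})\log(1/p))}$. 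For (c), in the regime $\mu \lesssim \log(1/p)^{r/(r-1)}$ the random variable $X$ is sufficiently Poisson-like that Poisson tail asymptotics (or a direct second-moment / Paley--Zygmund computation) yield $\Pr(X \ge (1+\eps)\mu) \ge e^{-O(\mu)}$. Taking the best of the three gives $-\log \Pr \le O(\Phi)$.

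\textbf{Upper bound.} This is the substantive part. We fix a threshold $\tau$ (depending on $n,p,r$) and split
\[
X = X_{\le\tau} + X_{>\tau}, \qquad X_{\le\tau} := \sum_{v:\, D_v \le \tau}\binom{D_v}{r}, \quad X_{>\tau} := \sum_{v:\, D_v > \tau}\binom{D_v}{r}.
\]
On $\{X \ge (1+\eps)\mu\}$ at least one of (A)~$X_{\le\tau} \ge (1+\eps/3)\mu$ or (B)~$X_{>\tau} \ge (\eps/3)\mu$ must hold (after verifying $\E X_{>\tau}$ is small). Since $X_{\le\tau}$ is a bounded-difference function of the edge indicators with single-coordinate change $O(\tau^{r-1})$, (A) is handled by a Talagrand- or martingale-type concentration inequality. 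For (B) we union over the candidate set $S$ of high-degree vertices and estimate the probability that the $|S|$ vertices carry enough \emph{excess} edges; this is where exponents of the form $p^{(\text{excess degree})}$ arise, matching the lower bound. The threshold is tuned to the regime: in the single-hub regime $p \le n^{-1/r}$ take $\tau \approx \mu^{1/r}$, so that (B) is dominated by $|S|=1$ with excess $\approx \mu^{1/r}$; in the many-hub regime $p \ge n^{-1/r}$ take $\tau \approx n$, so that (B) forces $\approx \mu/n^r$ maxed-out hubs.

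\textbf{Main obstacle.} The principal difficulty is producing matching constants in the exponent \emph{uniformly} across all $p \in (0,1]$ --- in particular, managing the transition at $p \approx n^{-1/r}$ where the optimal strategy switches from single-hub to many-hub, and the crossover to the trivial bound $\mu$ for small $\mu$. A second, technical hurdle is that the degrees $D_v$ are \emph{not} independent (they share edges), so (B) is not a direct Chernoff computation; one typically resolves this via a two-round sprinkling of the edges, or by applying a Reimer/BK-type disjoint-occurrence inequality to decouple the contributions of different high-degree vertices. Finally, getting the Talagrand estimate in~(A) to be tight down to the $\mu$-scale (rather than the naive $\mu^2/(n^2\tau^{2(r-1)})$-scale) near the crossover may require a careful moment truncation or an iterated martingale argument.
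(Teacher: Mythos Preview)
Your framework---degree-threshold split, concentration on the low-degree part, union bound on the high-degree part---is exactly the paper's, and your lower bound is essentially right (the paper packages single- and many-hub into one bipartite construction $K_{y,z}$, and handles small $\mu$ by counting vertex-disjoint copies rather than Paley--Zygmund). But the upper bound has a real gap at the threshold choice. With $\tau\approx\mu^{1/r}$ (or $\tau\approx n$ in the other regime), the best any overlap-based concentration inequality yields for (A) is exponent $\asymp\mu/\tau^{r-1}=\mu^{1/r}$ (resp.\ $\mu/n^{r-1}$), \emph{without} the $\log(1/p)$ factor; since that term then dominates $\Pr(A)+\Pr(B)$, your final bound also misses the log. (A side issue: your $X_{\le\tau}$ as written has edge-Lipschitz constant $\asymp\tau^r$, not $\tau^{r-1}$, because an edge flip can push $D_v$ across the threshold and delete a $\binom{\tau}{r}$-sized term. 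The paper instead uses $X_D:=\max\{\#K_{1,r}\text{ in }H:H\subseteq G_{n,p},\ \Delta(H)\le D\}$ together with Theorem~\ref{thm:C}, whose relevant parameter is the star-overlap $C\asymp D^{r-1}$.)

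The paper's fix is not a sharper inequality or iterated martingale for (A) but a \emph{smaller} threshold, $D\asymp\max\bigl\{1,\,\min\{\mu^{1/r},n\}/(\log(1/p))^{1/(r-1)}\bigr\}$, so that (A) already gives exponent $\mu/D^{r-1}\asymp\Phi$. The price is that (B) now sees vertices spread over a logarithmic range of degrees above $D$, and a single-scale union bound over $S$ no longer controls their total star contribution. The paper handles this with a dyadic multi-scale argument (Lemma~\ref{lem:det}): with $D_j=2^jD$ and $N_{D_j}$ the maximum number of \emph{edge-disjoint} copies of $K_{1,\lceil D_j\rceil}$ in $G_{n,p}$, one shows combinatorially that $N_{D_j}<\beta M/D_j$ for all $j$ (where $M=\max\{(\eps\mu)^{1/r},\eps\mu/n^{r-1}\}$) forces $X\le X_D+\eps\mu/2$, and then bounds $\Pr(\exists j:N_{D_j}\ge\beta M/D_j)\le n^{-2}p^{\Omega(M)}$ by a direct first-moment computation (Lemma~\ref{lem:ND}). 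Edge-disjointness already decouples the calculation---no BK/Reimer or sprinkling is needed.
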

\noindent
Note that the assumption~$(1+\eps)\mu \leq X_{r,n,1}$ is necessary (since~$X>X_{r,n,1}$ is impossible), 
and that the assumption~$(1+\eps)\mu \ge 1$ is natural (since otherwise~$\Pr(X \geq (1+\eps)\mu) = \Pr(X \ge 1) = 1 - \Pr(X=0)$ holds). 
We now motivate the intricate form of the exponent in~\eqref{eq:thm:Krp} for~$\eps=1$.   
First, Poisson approximation heuristics suggest that~$\Pr(X \ge 2\mu) \approx e^{-\Theta(\mu)}$ for small edge-probabilities~$p$. 
Second, it turns out that~$m=\Theta_r(\max\{\mu^{1/r},\mu/n^{r-1}\})$ appropriately clustered\footnote{For example, complete bipartite graphs~$K_{y,z}$ with suitable~$y = \Theta_r(\min\{\mu^{1/r},n\})$ and~$z=\Theta_r(\mu/y^r)$ suffice: they contain~$\ge z \binom{y}{r} = \Theta_r(z y^r)= \Theta_r(\mu)$ stars and $yz = \Theta_{r}(\mu/y^{r-1}) = \Theta_r(\max\{\mu^{1/r},\mu/n^{r-1}\})$ edges; see Lemma~\ref{lem:cls} for more details.\label{fn:bipartite}}
edges~$F$ suffice to create $2\mu$ copies of~$K_{1,r}$, which implies~$\Pr(X \ge 2\mu) \ge \Pr(F \subseteq G_{n,p}) \ge p^m = e^{m\log (1/p)}$. 
Intuitively, Theorem~\ref{thm:Krp} confirms that the more likely of these two mechanisms (the one with larger probability) 
controls the upper tail behaviour for constant~$\eps$.

\enlargethispage{\baselineskip}

Our second result determines %(in a wide range) 
the correct dependence of the stars upper tail on~$\eps$, up to constant factors in the exponent 
(this contrasts Theorem~\ref{thm:Krp} above, where the implicit constants may depend on~$\eps$). 
In particular, \eqref{eq:thm:Kri} below solves Problem~6.1 of Janson and Ruci{\'n}ski~\cite{DL} in the special case~$H=K_{1,r}$. % (where the focus is on constant $\eps$ only). 
For subgraph counts this is the first example where, for~$p$ bounded away from one, the order %of magnitude  
of the \emph{large deviation rate function} $-\log \Pr(X \geq (1+\eps)\mu)$ is determined for~$\eps=\eps(n)$ of form~$\eps \ge n^{-\alpha}$   
(the assumption~$\Phi(\eps) \ge 1$ is natural, since it ensures that we are dealing with exponentially small probabilities).   
\begin{theorem}[Upper tail problem for~$\eps =\eps(n) \ge n^{-\alpha}$]\label{thm:Kri}
Given $r \geq 2$, let $X=X_{r,n,p}$ be the number of copies of $K_{1,r}$ in $\Gnp$. 
Set $\mu:=\E X$, $\sigma^2 := \Var X$, and~$\varphi(x):=(1+x)\log(1+x)-x$. 
Given $\xi \in (0,1)$ there is~$\alpha=\alpha(r)>0$  
such~that, for~$p\in(0,1-\xi]$ and~$\eps \ge n^{-\alpha}$ satisfying~$\Phi(\eps) \ge 1$ and~$1 \le (1+\eps)\mu \leq X_{r,n,1}$, we~have 
\begin{equation}\label{eq:thm:Kri}
	-\log \Pr(X \geq (1+\eps)\mu) = \Theta_{r,\xi}\bigl(\Phi(\eps)\bigr) ,	
\end{equation}
with
\begin{equation}\label{eq:thm:Kri:Phi}
\Phi(\eps):=\min\Bigl\{\varphi(\eps)\mu^2/\sigma^2, \; \max\bigl\{(\eps \mu)^{1/r},(\eps \mu)/n^{r-1}\bigr\} \log (e/p)\Bigr\} .
\end{equation}
\end{theorem}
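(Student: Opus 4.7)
The bound in \eqref{eq:thm:Kri} consists of matching lower and upper estimates on $\Pr(X\ge(1+\eps)\mu)$. The two terms inside the minimum in \eqref{eq:thm:Kri:Phi} correspond to two competing mechanisms: a ``Gaussian/Poisson'' mechanism giving rate $\Phi_{G}:=\varphi(\eps)\mu^2/\sigma^2$, and a ``clustering'' mechanism (planting a dense subgraph) giving rate $\Phi_{C}:=\max\{(\eps\mu)^{1/r},(\eps\mu)/n^{r-1}\}\log(e/p)$. I plan to prove each bound separately and then take the better of the two on each side.

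For the \emph{lower bound on the probability}, the clustering term is handled by planting~$K_{y,z}$ as in footnote~\ref{fn:bipartite}, but with~$\eps\mu$ in place of~$\mu$: set $y=\Theta_r(\min\{(\eps\mu)^{1/r},n\})$ and $z=\Theta_r(\eps\mu/y^r)$, and condition on $K_{y,z}\subseteq\Gnp$, which has probability $p^{yz}=e^{-\Theta_r(\Phi_{C})}$. Conditional on the plant the expected star count exceeds $(1+c_r\eps)\mu$, and a Chebyshev or Janson-type estimate applied to the residual randomness gives $\Pr(X\ge(1+\eps)\mu\mid K_{y,z}\subseteq\Gnp)=\Omega_{r,\xi}(1)$. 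For the Gaussian term I would work with a low-degree truncation of~$X$ (restrict to centres~$v$ with $d_v\le T$) to obtain a sum of bounded random variables and then apply a Cram\'er-type change of measure / exponential tilt together with a second moment estimate, which in the Gaussian regime gives $\Pr(X\ge(1+\eps)\mu)\ge e^{-O(\Phi_G)}$; when~$\eps$ is large enough that $\Phi_G\gtrsim\Phi_C$ the clustering construction takes over on its own.

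For the \emph{upper bound on the probability} the key identity is $X=\sum_v\binom{d_v}{r}$, where $d_v$ is the degree of~$v$ in~$\Gnp$. Since $\binom{\cdot}{r}$ is convex, excess stars arise either from many vertices with slightly raised degrees (Gaussian) or from a few vertices with very large degrees (clustering). Fix a threshold $T\asymp\max\{np,y\}$ and split
\[
X = X_{L}+X_{H},\qquad X_{L}:=\sum_v\binom{d_v}{r}\indic{d_v\le T},\qquad X_{H}:=\sum_v\binom{d_v}{r}\indic{d_v>T},
\]
so that $\Pr(X\ge(1+\eps)\mu)\le\Pr(X_{L}\ge(1+\eps/2)\mu)+\Pr(X_{H}\ge\eps\mu/2)$. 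The event $\{X_H\ge\eps\mu/2\}$ forces $\Gnp$ to contain a bipartite configuration $K_{y',z'}$ with $y'z'\gtrsim\Phi_C/\log(e/p)$, so a union bound over centre and neighbourhood sets yields a $p^{y'z'}$ tail of the desired form. For $X_L$ I would exploit the bounded range $\binom{T}{r}$ of the summands together with the FKG/negative-correlation structure of the degrees, and apply a Bennett--Bernstein-type inequality (or a careful moment computation in the spirit of \cite{UTSG}) to obtain rate $\Phi_G$ provided~$T$ is tuned so that the effective variance of~$X_L$ matches~$\sigma^2$.

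The \emph{main obstacle} I anticipate is the sharp concentration of $X_L$ throughout the full range $\eps\ge n^{-\alpha}$: the summands $\binom{d_v}{r}\indic{d_v\le T}$ share edges, and a naive edge-exposure martingale inflates the variance proxy by polynomial factors in~$n$. To recover the true variance~$\sigma^2$ I expect to need a sharper device (an entropy/switching step, a Hanson--Wright-style quadratic estimate, or direct exponential moment computations), together with a delicate choice of~$T$ that simultaneously keeps the tail of~$X_H$ below~$\Phi_C$ and the truncated variance at~$\Theta(\sigma^2)$. It is precisely this balance that dictates the restriction $\eps\ge n^{-\alpha(r)}$, while the assumption $p\le 1-\xi$ ensures that~$\log(e/p)$ stays bounded away from zero and comparable to~$\log(1/p)$.
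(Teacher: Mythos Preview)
Your overall plan---split into a bounded-degree part and a high-degree part, and match against a planted $K_{y,z}$ for the lower bound---is the right picture, and the lower-bound side is close to the paper's (the paper replaces your Cram\'er tilt by conditioning on $|E(G_{n,p})|\ge(1+\eps)\binom{n}{2}p$ when $p\ge\xi/n$, and by a disjoint-copies/Poisson estimate when $p$ is very small, but a tilting argument would also work there).

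The upper bound, however, has a real gap in the choice of threshold. With $T\asymp\max\{np,y\}$ and $y=(\eps\mu)^{1/r}$, any concentration inequality for $X_L$ that goes through a dependency-graph or edge-Lipschitz argument produces a rate of order $\varphi(\eps)\mu/T^{r-1}$. When $T=y>np$ this equals $\varphi(\eps)(\eps\mu)^{1/r}/\eps$, and its ratio to $\Phi_C=(\eps\mu)^{1/r}\log(e/p)$ is $\varphi(\eps)/(\eps\log(e/p))$; for bounded~$\eps$ and small~$p$ this is $o(1)$, so you lose exactly the $\log(1/p)$ factor you are trying to capture. The paper's fix is to take the cutoff not at~$y$ but at
\[
D\;\asymp\;\max\Bigl\{1+np,\ \bigl(\varphi(t/\mu)\mu\big/\bigl(M\log(e/p^{\gamma})\bigr)\bigr)^{1/(r-1)}\Bigr\},
\]
which is engineered precisely so that $\varphi(\eps)\mu/D^{r-1}\asymp\min\{\Phi_G,\Phi_C\}$. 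The bounded-degree variable is then $X_D:=\max\{\#K_{1,r}(H):H\subseteq G_{n,p},\,\Delta(H)\le D\}$, and the concentration step you flag as the main obstacle is handled by a specific inequality of Warnke (Theorem~\ref{thm:C}) giving rate $\varphi(t/\mu)\mu/C$ with $C\asymp D^{r-1}$; no Hanson--Wright or entropy step is required.

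Your treatment of $X_H$ also needs more than a single union bound over $K_{y',z'}$: the excess can live at many degree scales at once, and the $\log(1/p)$ gain comes from the cost of a degree being \emph{far} above $D$, which is scale-dependent. The paper handles this by an iterative degree reduction $G_{n,p}=G_J\supseteq\cdots\supseteq G_0$ along dyadic levels $D_j=2^jD$, controlling at each level the maximum number $N_{D_j}$ of \emph{edge-disjoint} copies of $K_{1,\lceil D_j\rceil}$; edge-disjointness is what makes the crude estimate $\Pr(N_{D_j}\ge x)\le n^x\binom{n}{\lceil D_j\rceil}^xp^{x\lceil D_j\rceil}$ sharp enough. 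A further two-tier refinement of the constraint on $N_{D_j}$ (stronger at small scales, weaker at large scales) is what recovers the logarithm when $D$ is as small as $1+np$; this is the event~$\cTp$ in Section~\ref{sec:coreext}, and it is where the restriction $\eps\ge n^{-\alpha}$ actually enters.
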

\begin{remark}\label{rem:variance}
The variance satisfies~$\sigma^2 = \Theta_r((1-p)\mu(1+(np)^{r-1}))$; see, e.g., Lemma~3.5 in~\cite{JLR}.  
Furthermore, if~$\mu^{1-1/r} \ge \log n$ holds, then in~\eqref{eq:thm:Kri:Phi} we can replace~$\varphi(\eps)\mu^2/\sigma^2$ by~$(\eps\mu)^2/\sigma^2$; see~Lemma~\ref{cl:phi_to_t_square}. 
\end{remark}
\begin{conj}[Correct upper tail behaviour]\label{conj:Kri}
Theorem~\ref{thm:Kri} remains valid without the assumption~$\eps \ge n^{-\alpha}$.  
\end{conj}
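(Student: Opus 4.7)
Theorem~\ref{thm:Kri} already covers $\eps \ge n^{-\alpha(r)}$, so it suffices to treat the remaining range $\eps < n^{-\alpha(r)}$ with $\Phi(\eps)\ge 1$. Since $\varphi(\eps)\sim \eps^2/2$ as $\eps\to 0$, in this tiny range the first term $\varphi(\eps)\mu^2/\sigma^2 \approx \eps^2\mu^2/(2\sigma^2)$ of~\eqref{eq:thm:Kri:Phi} dominates, and $\Phi(\eps)\ge 1$ forces $\eps\mu \gtrsim \sigma$. The conjecture therefore reduces to establishing sharp moderate deviation estimates $\Pr(X\ge (1+\eps)\mu) = e^{-\Theta_{r,\xi}(\eps^2\mu^2/\sigma^2)}$ for $\eps\mu \in [C\sigma,\, n^{-\alpha}\mu]$.

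For the upper bound I would apply a polynomial concentration inequality to $X = \sum_v \binom{d_v}{r}$, for example Freedman's inequality on the edge-exposure martingale: its increments have conditional variance $O_r(\sigma^2/n^2)$ and are deterministically bounded by $O_r(n^{r-1}p^{r-1})$, yielding $\Pr(X\ge \mu+t)\le \exp(-\Omega_r(t^2/\sigma^2))$ throughout the Gaussian window. The Kim--Vu or Boucheron--Lugosi--Massart polynomial inequalities could serve as alternative black boxes. In either case the bound glues cleanly to the planting upper bound of Theorem~\ref{thm:Kri} at the crossover.

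The matching lower bound $\Pr(X\ge (1+\eps)\mu)\ge e^{-O(\eps^2\mu^2/\sigma^2)}$ is the main technical hurdle, and the appropriate tool depends on the density~$np$. When $np$ is bounded away from~$0$, my preferred route is a change-of-measure argument: tilt $p \mapsto p(1+\delta)$ with $\delta = \Theta_r(\eps)$, verify $\Pr_Q(X\ge \E_Q X)\ge 1/2$ via the classical asymptotic normality of~$X$, and apply the Cauchy--Schwarz bound $\Pr_P(A)\ge \Pr_Q(A)^2/\E_Q[\md Q/\md P]$ together with the direct computation $\E_Q[\md Q/\md P]\le \exp(O(\binom{n}{2} p\delta^2/(1-p)))$. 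A short calculation shows $\binom{n}{2}p\delta^2 \asymp \eps^2\mu^2/\sigma^2$ precisely when $np\gtrsim 1$, so the argument delivers the right exponent in the dense regime. When $np\to 0$ the variance degenerates to $\sigma^2\asymp \mu$ and a uniform tilt is wasteful by a factor $(np)^{-(r-1)}$; in that regime I would instead rely on Poisson approximation directly, using that $X$ is close in total variation to $\mathrm{Po}(\mu)$ so that the classical Poisson moderate deviation estimate $\Pr(\mathrm{Po}(\mu)\ge (1+\eps)\mu) = e^{-\Theta(\varphi(\eps)\mu)}$ transfers to $X$.

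The hard part will be the intermediate regime $np\asymp 1$, where neither Gaussian nor Poisson approximation is clean on its own, together with the delicate gluing to the planting regime of Theorem~\ref{thm:Kri} at the crossover $(\eps\mu)^{1/r}\log(e/p)\asymp \eps^2\mu^2/\sigma^2$. A further obstacle is that existing limit theorems for $X$ (e.g.\ Ruci\'nski~\cite{SGN}) only yield convergence in distribution, whereas the conjecture demands pointwise control of $\Pr_Q(X\ge \E_Q X)$ uniformly down to scales $\eps$ that may shrink faster than any polynomial in~$n$; one likely needs a Berry--Esseen or local central limit refinement tailored to star counts, or else a direct second-moment analysis of the indicator $\indic{X\ge (1+\eps)\mu}$ under the tilted measure.
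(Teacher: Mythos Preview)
This statement is labeled as a \emph{conjecture} in the paper and is explicitly left open; there is no proof in the paper for you to be compared against. The paper only offers partial evidence: Theorem~\ref{thm:Kripc}(i) verifies the conjecture in the range $p \ge (\log n)/n$, and Theorem~\ref{thm:Kr} gives the upper bound whenever $np \notin (n^{-\gamma},\gamma\log n)$. The genuinely open regime is therefore $np$ of roughly constant order (more precisely $np \in (n^{-\gamma},\gamma\log n)$) together with $\eps < n^{-\alpha}$, which you correctly isolate as the crux.

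Your proposal, however, contains a concrete gap on the upper bound side. You claim that the edge-exposure martingale for $X$ has increments ``deterministically bounded by $O_r(n^{r-1}p^{r-1})$'', but this is false: flipping a single edge $\{u,v\}$ changes $X$ by $\binom{d_u-1}{r-1}+\binom{d_v-1}{r-1}$, which is only $O(n^{r-1}p^{r-1})$ \emph{typically}, not deterministically; on the event that some vertex has degree of order $n$ the Lipschitz constant is $\Theta(n^{r-1})$. Plugging this worst-case bound into Freedman gives an exponent of order $t^2/(\sigma^2 + n^{r-1}t)$, which is far too weak. Controlling the effective Lipschitz constant is precisely what the paper's sparsification machinery (Lemmas~\ref{lem:det}--\ref{lem:det2} and the choice of~$D$) is designed to do, and it is exactly in the regime $np \approx 1$ with tiny $\eps$ that the required degree threshold~$D$ fails the technical condition~\eqref{eq:ND:cond}; a naive martingale argument does not sidestep this. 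The Kim--Vu and BLM inequalities you mention suffer from the same obstruction (they lose polylogarithmic factors, or require control of higher-order partial derivatives that again blow up with the maximum degree).

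On the lower bound side your plan is closer to the paper's existing tools: your tilting argument is essentially Lemma~\ref{lem:edges} (valid for $p \ge \xi/n$), and your Poisson route is essentially Lemma~\ref{lem:disj} (valid for $p \le n^{-1-1/(r+1)}$). The paper already contains both, and the gap between their ranges of validity is again the window $np \in [n^{-1/(r+1)},\xi)$. Your diagnosis that a Berry--Esseen or local CLT refinement for star counts would be needed there is reasonable, but that is the heart of the open problem, not a routine step.
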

\noindent
We now motivate the somewhat unusual form of the exponent in~\eqref{eq:thm:Kri}. %, which is again determined by two different mechanisms . 
First, normal approximation heuristics\footnote{The same normal heuristic suggests that in~\eqref{eq:thm:Krp} we should perhaps have used~$\mu^2/\sigma^2$ instead of~$\mu$, 
but it turns out that then the~$\mu^2/\sigma^2$ term would only matter for~$\Phi$ (i.e., determine the minimum) in a range of~$p$ where~$\mu^2/\sigma^2=\Theta_{r,\eps}(\mu)$ holds.} 
suggest that~$\Pr(X \ge (1+\eps)\mu) \approx e^{-\Theta_r((\eps \mu)^2/\sigma^2)}$ for very small~$\eps$, 
and this sub-Gaussian tail is consistent with the~$\varphi(\eps)\mu^2/\sigma^2$ term in~\eqref{eq:thm:Kri:Phi} since~$\varphi(\eps)=\Theta(\eps^2)$ as~$\eps \to 0$ (the function~$\varphi$ is well-known from Chernoff bounds). 
Second, in~$G_{n,p}$ we usually expect to have at least~$(1-\eps)\mu$ copies of~$K_{1,r}$, say, so enforcing~$2\eps\mu$ extra copies
 via~$m=\Theta_r(\max\{(\eps\mu)^{1/r},(\eps\mu)/n^{r-1}\})$ appropriately clustered\footnote{As before, complete bipartite graphs~$K_{y,z}$ with  suitable~$y = \Theta_r(\min\{(\eps\mu)^{1/r},n\})$ and~$z=\Theta_r(\eps\mu/y^r)$ suffice (see~Lemma~\ref{lem:cls}).} 
edges~$F$ should thus be enough to give a total of~$(1+\eps) \mu$ copies of~$K_{1,r}$; this heuristic 
loosely suggests~$\Pr(X \ge (1+\eps)\mu) \ge \Omega_r(1) \cdot \Pr(F \subseteq G_{n,p}) \ge \Omega_r(p^m) 
\ge e^{-O_r(m\log (1/p))}$. 
Intuitively, Conjecture~\ref{conj:Kri} predicts that the form of the upper tail is indeed 
determined by either % (local) 
sub-Gaussian or % (global) 
`clustered'~behaviour, 
and Theorem~\ref{thm:Kri} confirms this~for~$\eps =\eps(n) \ge n^{-\alpha}$.

\enlargethispage{\baselineskip}

Our third result approaches the upper tail problem from a conceptually slightly different perspective, studying~$\Pr(X \ge \mu +t)$ for general deviations~$t$ 
(this contrasts Theorem~\ref{thm:Krp} and~\ref{thm:Kri} above, where we focus on the large deviations range~$t=\eps\mu$ and then put restrictions on~$\eps$). 
For subgraph counts, inequality~\eqref{eq:thm:Krip} below is the first example where, for moderately large edge-probabilities~$p$, 
the order of~$-\log \Pr(X \geq \mu+t)$ is \emph{completely resolved} for all exponentially small deviations (where~$t \ge \sigma$ is the natural target assumption). 
%verifying the special case~$(\log n)/n \le p \le 1-\xi$ of~Conjecture~\ref{conj:Kri}.
We complement this result with inequality~\eqref{eq:thm:Kripc} below, which is the first example where the order of~$-\log \Pr(X \geq \mu+t)$ is resolved for nearly 
all deviations~$t$ where the `clustered' behaviour determines the exponent 
(here~$t^2/\sigma^2 \ge M(t) \log (e/p)$ is the natural target assumption for~$\mu^{1-1/r} \ge \log n$;  
see~\eqref{eq:thm:Kri:Phi}, Remark~\ref{rem:variance}, and Conjecture~\ref{conj:Kri}). 
\begin{theorem}[General upper tail bounds: moderate deviations and clustered regime]\label{thm:Kripc}
Given $r \geq 2$, let $X=X_{r,n,p}$ be the number of copies of $K_{1,r}$ in $\Gnp$. Set~$\mu:=\E X$ and~$\sigma^2 := \Var X$. 
Given $\xi \in (0,1)$, then the following holds whenever~$p \in (0,1-\xi]$ and~$1 \le \mu + t \le X_{r,n,1}$. 
\vspace{-0.125em}\begin{romenumerate}
 \item If~$p \ge (\log n)/n$ and $t \ge \sigma$, then 
\begin{equation}\label{eq:thm:Krip}
	-\log \Pr(X \geq \mu + t)  = \Theta_{r, \xi}\bigl(\Psi(t)\bigr) \quad \text{ with } \quad \Psi(t) := \min\Bigl\{t^2/\sigma^2, \; M(t) \log (e/p)\Bigr\} . 
\end{equation}
 \item If~$\mu \ge \xi$ and~$t > 0$ satisfies $t^2/\sigma^2 \ge M(t) \log (e/p) \cdot  (\log n)^{2r}$, then 
\begin{equation}\label{eq:thm:Kripc}
	-\log \Pr(X \geq \mu + t)  = \Theta_{r, \xi}\bigl(M(t) \log (e/p)\bigr) \quad \text{ with } \quad M(t) := \max\Bigl\{ t^{1/r}, t/n^{r-1} \Bigr\}  . 
\end{equation} 
\end{romenumerate}
\end{theorem}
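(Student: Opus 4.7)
The plan is to reduce the problem to the degree sequence via the identity $X = \sum_{v=1}^{n} \binom{d_v}{r}$, where $d_v$ is the degree of vertex~$v$ in $\Gnp$ and $d_v \sim \Bin(n-1,p)$ marginally. Both halves of the theorem then follow from matching upper and lower bounds, obtained by rather different methods.

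For the upper bound on $\Pr(X \ge \mu + t)$ I would deploy a truncation/threshold argument. Fixing a threshold $\Delta$ (to be optimised later), write
\[
 \Pr(X \ge \mu + t) \le n \Pr(d_1 \ge \Delta) + \Pr\Bigl(\sum_{v} \binom{d_v}{r} \indic{d_v < \Delta} \ge \mu + t\Bigr) .
\]
The first term is controlled by a Chernoff bound for $\Bin(n-1,p)$. The second term involves a Lipschitz-like function of the independent edge indicators whose individual influences are $O(\Delta^{r-1})$, so a Bernstein--Talagrand-type inequality yields a tail of order $\exp(-\Omega(t^2/(\sigma^2 + t\Delta^{r-1})))$. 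Balancing these two contributions in $\Delta$ produces the sub-Gaussian term $t^2/\sigma^2$ when it dominates and the clustered term $M(t)\log(e/p)$ otherwise; in part~(ii) the additional $(\log n)^{2r}$ slack in the hypothesis is precisely what is needed to absorb the logarithmic losses coming from the union bound over $n$ vertices and from the ratio $\Delta/(np)$ inside the Chernoff exponent.

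For the matching lower bound on $\Pr(X \ge \mu + t)$ there are two constructions. The clustered term is realised by planting a complete bipartite subgraph $F = K_{y,z}$ with $y = \Theta_r(\min\{t^{1/r}, n\})$ and $z = \Theta_r(t/y^r)$: as outlined in the footnote preceding Theorem~\ref{thm:Krp}, this contributes $\Theta_r(t)$ extra copies of $K_{1,r}$ deterministically while using only $\Theta_r(M(t))$ edges, and a standard planting argument (conditioning on $F \subseteq \Gnp$ and using Janson-type concentration on the remaining edges to keep the contribution from the rest of the graph close to $\mu$ with constant probability) yields $\Pr(X \ge \mu + t) \ge \Omega_r(p^{O_r(M(t))})$. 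For part~(i), the sub-Gaussian lower bound $\Pr(X \ge \mu + t) \ge e^{-O_r(t^2/\sigma^2)}$ in the regime $p \ge (\log n)/n$ and $t \ge \sigma$ follows from a Berry--Esseen / Gaussian comparison argument applied to $X$: the condition $np \ge \log n$ ensures that the degrees concentrate sharply and that $X = \sum_v \binom{d_v}{r}$ is well approximated by a Gaussian down to moderate deviations of order~$\sigma$.

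The main obstacle will be obtaining the correct order in the clustered upper bound, especially in part~(ii) where $t$ may be much smaller than~$\mu$. The truncation step unavoidably loses a union-bound factor of $n$ and a Chernoff factor of $\log(\Delta/(enp))$; the $(\log n)^{2r}$ slack in the hypothesis of~(ii) is designed precisely to absorb these losses and still deliver $-\log\Pr(X \ge \mu + t) = \Omega_r(M(t)\log(e/p))$. Removing this slack altogether would essentially resolve Conjecture~\ref{conj:Kri}, so I expect that the present truncation-based approach cannot be pushed further without substantial new ideas for sharper concentration of the truncated statistic $\sum_v \binom{d_v}{r}\indic{d_v < \Delta}$ that avoid the logarithmic losses.
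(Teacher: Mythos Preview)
Your high-level plan matches the paper's, but the single-threshold decomposition in the upper bound does not balance. Requiring \emph{every} vertex to have degree below~$\Delta$ forces~$\Delta$ so large that the concentration exponent for the truncated sum falls short of~$M(t)\log(e/p)$: for instance with $r=2$, $p=n^{-1/2}$, $t=\mu$ (a valid instance of both~(i) and~(ii)), making $n\Pr(d_1\ge\Delta)\le e^{-cM\log(e/p)}$ needs $\Delta \gtrsim n$, while making the concentration exponent at least $cM\log(e/p)$ needs $\Delta \lesssim n/\log n$; the $(\log n)^{2r}$ slack in the hypothesis does not close this gap (it is a lower bound on $t^2/\sigma^2$, not extra room in the exponent). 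The paper's remedy (Lemmas~\ref{lem:det2} and~\ref{lem:ND}) is to \emph{allow} high-degree vertices: it introduces an event~$\cTp$ that, at each dyadic scale $D_j=2^jD$, merely bounds the number~$N_{D_j}$ of edge-disjoint copies of $K_{1,\lceil D_j\rceil}$ by~$O(M/D_j)$, and shows via iterative edge-deletion that $\cTp$ implies $X \le X_D + t/2$, where $X_D$ is the maximum star count over all subgraphs of~$\Gnp$ with maximum degree at most~$D$. This multi-scale accounting, not a single cutoff, is the missing idea. Two smaller issues: the sum $\sum_v\binom{d_v}{r}\indic{d_v<\Delta}$ has edge-Lipschitz constant~$\Theta(\Delta^r)$, not~$O(\Delta^{r-1})$, since flipping the indicator costs~$\binom{\Delta-1}{r}$; and a generic Bernstein--Talagrand bound on the~$\binom{n}{2}$ edge indicators does not put~$\sigma^2$ in the denominator --- the paper invokes a tailored inequality of Warnke (Theorem~\ref{thm:C}) to get exponent $\varphi(t/\mu)\mu/O(D^{r-1})$.

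For the sub-Gaussian lower bound in part~(i), Berry--Esseen is only useful while the Gaussian tail exceeds the approximation error, i.e.\ for $t/\sigma = O(\sqrt{\log n})$; it gives nothing for $\sigma\sqrt{\log n}\ll t\le\beta\mu$. The paper (Lemma~\ref{lem:edges}) instead conditions on the edge count $|E(\Gnp)| \ge (1+\Theta(t/\mu))\binom{n}{2}p$, an event of probability $\exp(-\Theta_{r,\xi}(t^2/\sigma^2))$, and argues that under this conditioning $X\ge\mu+t$ holds with probability bounded away from zero. Your planting construction for the clustered lower bound is essentially the paper's Lemma~\ref{lem:cls:refined}.
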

\noindent
By Remark~\ref{rem:variance}, inequalities~\eqref{eq:thm:Krip}--\eqref{eq:thm:Kripc} provide further evidence for Conjecture~\ref{conj:Kri} (and verify it for~$p \ge (\log n)/n$).

\subsection{Some comments}
The main focus of this paper are upper bounds on the upper tail~$\Pr(X \geq (1+\eps)\mu)$. 
Developing~\cite{APUT,LW16}, here our high-level proof strategy is based on the idea that (after ignoring certain `bad' events with negligible probabilities) using combinatorial arguments we can find a `well-behaved' subgraph~$G_0 \subseteq G_{n,p}$ in the sense that 
(i)~the number of stars~$K_{1,r}$ in~$G_0$ and~$G_{n,p}$ are approximately the same (differ by at most $\eps\mu/2$, say), and 
(ii)~the maximum degree of~$G_J$ is `not too large' (which intuitively helps for showing concentration of stars). %; see Lemma~\ref{lem:det}. 
Using modern Chernoff-like upper tail bounds, we then show that it is very unlikely to have a `bad' subgraph~$G' \subseteq G_{n,p}$ with `not too large' maximum degree and `many'~stars (at least~$(1+\eps/2)\mu$ many, say). %; see Corollary~\ref{cor:XD}. 
Putting things together, the punchline is then that we can only have~$X \geq (1+\eps)\mu$ many stars if one of the discussed unlikely `bad' events occur, 
which (after some technical work) eventually gives the desired upper bounds on the upper tail~$\Pr(X \geq (1+\eps)\mu)$; see Section~\ref{sec:upper} for more details.

Finally, let us briefly compare our upper tail results for stars with very recent results from the large deviation theory literature, which are spearheaded by Chatterjee, Dembo, Lubetzky, Varadhan, Zhao, and many others (see, e.g.,~\cite{CV2011,LZ2012,CD2014,LZ2014,Eldan16,BGLZ17,DC2018}). 
For general~$H$, these aim at determining the asymptotics of~$-\log \Pr(X_H \geq (1+\eps)\E X_H)$ for constant~$\eps$ and large edge-probabilities of form~$p=\Theta(1)$ or~$p \ge n^{-\delta_H}$. 
For stars~$H=K_{1,r}$, inequality~\eqref{eq:thm:Kri} from Theorem~\ref{thm:Kri} is~weaker in the sense that it only determines~$-\log \Pr(X_{K_{1,r}} \geq (1+\eps)\E X_{K_{1,r}})$ up to constant factors, 
but it~is~stronger in the sense that it covers a much wider range of the parameters, including~$\eps=\eps(n) \ge n^{-\alpha}$ and all~$p=p(n)$ of interest.  
Obtaining such tail estimates with increased ranges of applicability is useful for combinatorial applications, where one is usually `willing to give up a little bit on the tail', in particular on the `inessential numerical constants' in the exponent (see~\cite{Vu2001,DL}).  
Furthermore, estimates of form~\eqref{eq:thm:Krip}--\eqref{eq:thm:Kripc} are also quite satisfactory from a concentration inequality perspective. 
Overall, we hope that our results stimulate more research into such estimates for other graphs~$H$.

\subsection{Organization}
%
%The remainder of this paper is organized is follows. 
In~Section~\ref{sec:upper} we prove the upper bounds on the upper tail  
from Theorem~\ref{thm:Krp},~\ref{thm:Kri},~and~\ref{thm:Kripc} (and discuss a simple extension). 
The corresponding (fairly routine) lower bounds are then established in Appendix~\ref{sec:lower}.

\section{Upper bounds on the upper tail}\label{sec:upper}
In this section we establish the upper bounds on the upper tail~$\Pr(X \geq (1+\eps)\mu)$ from Theorem~\ref{thm:Krp}, \ref{thm:Kri}, and~\ref{thm:Kripc}. 
Our core argument has two strands. 
In the first \emph{combinatorial part} we iteratively decrease the maximum degree of the random graph~$G_{n,p}=G_J \supseteq \cdots \supseteq G_0$ by edge-deletion (the idea is to remove large stars~$K_{1,D_j}$ with~$D_j \gg r$ from~$G_j$) 
until the final graph~$G_0$ has sufficiently low maximum degree, say at most~$D$. 
This degree bound allows us to estimate the number of stars~$K_{1,r}$ in~$G_0$ via a `well-behaved' auxiliary random variable~$X_D$. 
Taking into account the number of stars~$K_{r}$ which are removed when passing from~$G_{n,p}=G_{J}$ to~$G_0$, 
this allows us (by means of a technical event~$\cT$) % see~Lemma~\ref{lem:det} and~\ref{lem:det2}) 
to approximate the number~$X=X_{r,n,p}$ of copies of~$K_{1,r}$ in~$\Gnp$ using $X_D$ and several further auxiliary random variables~$N_{D_j}$ (which intuitively bound the number of~$K_{1,D_j}$ in~$G_{n,p}$). 
In the second \emph{probabilistic part} we then estimate the upper tails of these auxiliary variables using a concentration inequality of Warnke~\cite{APUT} and ad-hoc arguments (exploiting the careful definitions of the variables~$X_D$ and~$N_{D_j}$ given in Section~\ref{sec:core}). % see~Corollary~\ref{cor:XD} and Lemma~\ref{lem:ND}. 
Putting things together, the core argument then proceeds roughly as follows: 
by the combinatorial part~$X \ge (1+\eps)\mu$ can only happen if at least one of the auxiliary variables~$X_D$ or~$N_{D_j}$ is `large', 
and by the probabilistic part  the probability of this `bad' event is at most the desired `correct' upper tail probability 
(for suitable choices of the degree~constraint~$D$ and other~parameters).

In Section~\ref{sec:core} we first illustrate this argument for the simpler setup of Theorem~\ref{thm:Krp}, 
and in  Section~\ref{sec:coreext} we then extend the argument to the more precise tail estimates of Theorem~\ref{thm:Kri} and~\ref{thm:Kripc}. 
Finally, in Section~\ref{sec:ext} we also briefly discuss a straightforward extension (to a certain sum of iid variables).

\subsection{Core argument for Theorem~\ref{thm:Krp}}\label{sec:core}
We start by introducing the main random variables and events for Theorem~\ref{thm:Krp} 
(as we shall see, their careful definitions will facilitate the interplay between the combinatorial and probabilistic parts of our argument).   
For~$x \ge 0$, let~$X_x$ denote the maximum number of copies of~$K_{1,r}$ in any subgraph~$H \subseteq \Gnp$ with maximum degree at most~$x$.
For~$y > 0$, let~$N_{y}$ denote the maximum size of any collection of edge-disjoint~$K_{1,\ceil{y}}$ in~$\Gnp$. 
For~$\beta,D,t > 0$ let~$\cT=\cT(\beta,D,t)$ denote the `technical' event~that 
\begin{equation}
\label{eq:Nj}
N_{D_j} < \frac{\beta M}{D_j} \qquad \text{for all $j \in \NN = \{0, 1, \dots\}$ ,} 
\end{equation}
where we tacitly used the following convenient parametrization:
\begin{equation}\label{eq:par}
\begin{split}
M =M(t) &:= \max\bigl\{t^{1/r}, \: t/n^{r-1}\bigr\},\\
%s = s(\gamma) &:= \log(e/p^{\gamma}),\\
D_j =D_j(D) &:=2^j D.
\end{split}
\end{equation}
(In this subsection we shall only use~$t=\eps\mu$; working with general~$t$ is convenient for the later extensions.)

The following combinatorial lemma is at the heart of our argument, 
and it intuitively states that $X \approx X_{D}$ whenever the event~$\cT=\cT(\beta,D,t)$ holds. 
Its  proof is inspired by ideas developed in~\cite{APUT,LW16}, but contains several new ideas. 
For example, instead of iteratively sparsifying an auxiliary hypergraph (which encodes the edge-sets of all stars~$K_{1,r}$ in~$G_{n,p}$) we here iteratively sparsify the random graph~$G_{n,p}$ itself. 
Furthermore, in order to obtain the correct tail behaviour, in inequality~\eqref{eq:Nj} we need to work with~$M=\max\{t^{1/r},t/n^{r-1}\}$ instead of the simpler choice~$M=t^{1/r}$ suggested by~\cite{APUT} 
(we achieve this by adding an extra degree bound to the argument, bounding the initial maximum degree by~$\oM=\min\{M,n\}$ instead of just~$M$). 
\begin{lemma}\label{lem:det}
Given~$\beta \in (0,1/32]$ and~$D,t>0$, the event $\cT(\beta,D,t)$ implies~$X_{D} \le X \le X_{D} + t/2$. 
\end{lemma}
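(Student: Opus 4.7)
The lower bound $X_D \le X$ is immediate from the definitions: since $X_D$ counts the copies of $K_{1,r}$ in some subgraph $H \subseteq G_{n,p}$ with $\Delta(H) \le D$, every such copy also lies in $G_{n,p}$, so $X_D \le X$. This does not use the event $\cT$.

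For the upper bound $X \le X_D + t/2$ under $\cT=\cT(\beta,D,t)$, my plan is to construct an explicit subgraph $G_0 \subseteq G_{n,p}$ with $\Delta(G_0) \le D$ whose complement in $G_{n,p}$ destroys at most $t/2$ copies of $K_{1,r}$. The construction is an iterative sparsification. Let $J^*$ be the smallest $j \ge 0$ with $D_j \ge \min\{\beta M, n\}$; under $\cT$ the bound $N_{D_{J^*}} < \beta M / D_{J^*} \le 1$ forces $N_{D_{J^*}} = 0$ (as $N_{D_{J^*}}$ is integer-valued), so no vertex of $G_{n,p}$ has degree $\ge \lceil D_{J^*} \rceil$, and combining this with the trivial $\Delta(G_{n,p}) \le n-1$ shows that $G_{J^*} := G_{n,p}$ already has maximum degree at most $D_{J^*}$. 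For $j = J^* - 1, \ldots, 0$ I then obtain $G_j \subseteq G_{j+1}$ by greedily extracting edge-disjoint copies of $K_{1, \lceil D_j \rceil}$ from $G_{j+1}$ until the remaining graph has maximum degree at most $D_j$; since these extracted stars are also edge-disjoint copies of $K_{1, \lceil D_j \rceil}$ in $G_{n,p}$, at most $N_{D_j}$ of them are removed in step $j$.

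To bound the destroyed copies of $K_{1,r}$, I use that any edge removed from $G_{j+1}$ (which has $\Delta \le D_{j+1} = 2 D_j$) lies in at most $O_r(D_j^{r-1})$ copies of $K_{1,r}$, by summing $\binom{d_u - 1}{r-1} + \binom{d_v - 1}{r-1} \le O_r(D_{j+1}^{r-1})$ over its two endpoints. Since step $j$ removes at most $N_{D_j} \cdot \lceil D_j \rceil = O(N_{D_j} D_j)$ edges, the destroyed copies in step $j$ total at most $O_r(N_{D_j} D_j^r) \le O_r(\beta M D_j^{r-1})$, using $N_{D_j} < \beta M / D_j$. Summing geometrically,
\[
\text{Total destroyed} \;\le\; O_r\Bigl(\beta M \sum_{j=0}^{J^*-1} D_j^{r-1}\Bigr) \;\le\; O_r\bigl(\beta M D_{J^*}^{r-1}\bigr) \;\le\; O_r\bigl(\beta M \oM^{r-1}\bigr) \;=\; O_r(\beta t),
\]
where the penultimate inequality uses $D_{J^*} < 2 \min\{\beta M, n\} \le 2 \oM$, and the final equality uses the identity $M \oM^{r-1} = t$ (verified by the two cases: $t \le n^r$ gives $M = t^{1/r}$ and $\oM = M$, while $t > n^r$ gives $M = t/n^{r-1}$ and $\oM = n$). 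Choosing $\beta$ small enough (the claim is that $\beta \le 1/32$ suffices) makes this at most $t/2$, and since $\Delta(G_0) \le D$ implies $G_0$ contains at most $X_D$ copies of $K_{1,r}$, we conclude $X \le X_D + t/2$.

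The main obstacle will be sharpening the constants in the counting so that the universal threshold $\beta \le 1/32$ works uniformly for every $r \ge 2$. This requires replacing the crude $O_r(\cdot)$ factors above by explicit constants (which behave like $2^r/(r-1)!$ and are bounded uniformly in $r$ once combined with the corresponding powers of $2$ from the geometric sum), and cleanly absorbing the edge case $\beta M > n$, where the starting index $J^*$ is effectively dictated by the trivial bound $n-1$ rather than by $\cT$, into the same unified framework.
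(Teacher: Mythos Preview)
Your proposal is correct and follows essentially the same approach as the paper: iterative dyadic degree reduction $G_{n,p}=G_{J^*}\supseteq\cdots\supseteq G_0$, bounding the number of stars removed at level~$j$ by~$N_{D_j}$, bounding destroyed $K_{1,r}$ copies per removed edge via the degree bound~$\Delta(G_{j+1})\le 2D_j$, and finishing with the geometric sum and the identity $M\oM^{r-1}=t$. The only cosmetic differences are that the paper sets the top level via $D_J\ge\min\{M,n\}$ (rather than your $\min\{\beta M,n\}$) and, at each step, deletes \emph{all} edges incident to the centres of a maximal edge-disjoint star collection (rather than just the star edges); both variants yield the same edge count $\le 2N_{D_j}D_j$ and the same per-edge bound $2\binom{\Delta_{j+1}}{r-1}\le 2^r D_j^{r-1}/(r-1)!\le 4D_j^{r-1}$, so the explicit constant $16\beta\le 1/2$ drops out exactly as you anticipate.
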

\noindent 
The lower bound~$X \ge X_{D}$ of Lemma~\ref{lem:det} is trivial. 
For the upper bound the idea is to iteratively decrease the maximum degree of $\Gnp$, yielding $\Gnp=G_J \supseteq \cdots \supseteq G_0$. 
By bounding the number of~$K_{1,r}$ which are removed when passing from $G_{j+1}$ to $G_j$, this eventually allows us to estimate the total number of~$K_{1,r}$. 
\begin{proof}[Proof of Lemma~\ref{lem:det}]
Define~$\oM:=\min\{M,n\}$. 
Let~$J$ be the smallest integer~$J \ge 0$ with~$D_{J} \ge \oM$. 
We set~$G_J = \Gnp$ and inductively construct~$G_J \supseteq \cdots \supseteq G_0$. Given $G_{j+1}$, $0 \le j \le J - 1$, let $\cC_{j+1}$ be a maximal set of edge-disjoint collection of stars $K_{1, \ceil{D_j}}$. 
We remove all edges from~$G_{j+1}$ which are incident to a centre vertex of some star in~$\cC_{j+1}$, and denote the resulting graph by~$G_{j}$. 

Writing~$\Delta_j=\Delta(G_j)$ for the maximum degree of~$G_j$, 
we claim that $\Delta_j \le D_{j}$ for all $0 \le j \le J$. 
For $G_J=\Gnp$ we use a case distinction. 
If~$M \ge n$, then trivially $\Delta_{J} \le n = \oM \le D_J$. 
Otherwise~$D_J \ge \oM = M$, in which case~\eqref{eq:Nj} entails $N_{D_J} < \beta < 1$, so~$G_{n,p}=G_J$ contains no~$K_{1,\ceil{D_J}}$, and $\Delta_{J} \le \ceil{D_J}-1 \le D_J$ follows.  
Further considering~$G_{j+1}$ with $0 \le j \le J - 1$, we note that $\Delta_j \le \ceil{D_j}-1 \le D_j$ by construction, because otherwise we could add another~$K_{1,\ceil{D_j}}$ to~$\cC_{j+1}$ (contradicting maximality).  

With~$G_J \supseteq \cdots \supseteq G_0$ in hand, we now count the total number of copies of~$K_{1,r}$ in~$\Gnp=G_J$. 
Note that, given an edge $e=\{v_1,v_2\}$ of $G_{j+1}$ with $0 \le j < J$, we can construct any $K_{1,r}$ in $G_{j+1}$ containing $e$ by first selecting a centre vertex $v_c \in \{v_1,v_2\}$ and then $r-1$ additional neighbours of~$v_c$. 
Hence in $G_{j+1}$ any edge is contained in at most $2 \binom{\Delta_{j+1}}{r-1} \le 2^r D_{j}^{r-1}/(r-1)! \le 4 D_j^{r-1}$ copies of $K_{1,r}$. 
Recalling the definition of~$N_{D_j}$, note that when, passing from $G_{j+1}$ to $G_{j}$, we remove at most $N_{D_j}\Delta_{j+1} \le 2N_{D_j} D_j$ edges. 
So, since~$G_{0}$ contains at most~$X_{D_0} = X_D$ copies of $K_{1,r}$, using~\eqref{eq:Nj} and~$\max_{0 \le j < J}D_j \le \oM$ it follows that  
\begin{equation}\label{eq:x:Decomp}
X  \le X_{D} + \sum_{0 \le j < J} \bigl(2N_{D_j} D_j \cdot 4 D_j^{r-1}\bigr) 
\le X_{D} + 8\beta M \cdot \sum_{\substack{j \in \NN: D_j \le \oM}} D_j^{r-1}.
\end{equation}
Recalling~$D_j=2^{j}D$ and~$r \ge 2$, using~$\oM=\min\{M,n\}$, $M=\max \{t^{1/r}, \: t/n^{r-1}\}$ and~$\beta \le 1/32$ we infer
\begin{equation}\label{eq:x:Decomp2}
X - X_{D}  \le 8\beta M \cdot 2 \oM^{r-1} \le 16 \beta \cdot \min\{M^r, M n^{r-1}\} \le t/2, 
\end{equation}
which completes the proof. 
\end{proof}
Applying Lemma~\ref{lem:det} with~$t=\eps \mu$, in the probabilistic part of the argument 
it remains to estimate~$\Pr(X_{D} \ge \mu + \eps\mu/2)$ and~$\Pr(\neg\cT(\beta,D,\eps \mu))$. 
We shall exploit the maximum degree constraint of~$X_D$ via the following upper tail inequality of Warnke~\cite{APUT}, 
which extends classical Chernoff bounds to random variables with `well-behaved dependencies'  
(and allows us to go beyond the method of typical bounded differences~\cite{TBD}). 
\begin{theorem}[Corollary of~{\cite[Theorem~9]{APUT}}]\label{thm:C}
Let $(\xi_i)_{i \in \fS}$ be a finite family of independent random variables with~$\xi_i \in \{0,1\}$. 
Given a family $\cI$ of subsets of $\fS$, consider
random variables $Y_{\alpha} := \prod_{i \in \alpha}\xi_i$ with~$\alpha \in \cI$, and suppose $\sum_{\alpha \in \cI} \E Y_{\alpha} \le \mu$. 
Define $Z_C :=\max \sum_{\alpha \in \cJ} Y_{\alpha}$, where the maximum is taken over 
all $\cJ \subseteq \cI$ with $\max_{\beta \in \cJ}|\{\alpha \in \cJ: \alpha \cap \beta \neq \emptyset\}| \leq C$. 
Set $\varphi(x):=(1+x)\log(1+x)-x$.  
Then, for all $C,t>0$, 
\begin{equation}\label{eq:C}
\Pr(Z_C \geq \mu +t) 
\le \exp\left(-\frac{\varphi(t/\mu)\mu}{C} \right)  
\le \exp\biggl(-\frac{t^2}{2C(\mu +t)}\biggr) .
\end{equation} 
\end{theorem}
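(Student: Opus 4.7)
My plan is to derive Theorem~\ref{thm:C} via the Cram\'er--Chernoff exponential moment method, with a greedy coloring of the intersection graph to handle the local dependencies. The self-contained ingredients are standard, and the statement is in fact a direct corollary of~\cite[Theorem~9]{APUT}, which already packages these ingredients into a reusable framework.

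First I would write $\Pr(Z_C \ge \mu + t) \le e^{-\lambda(\mu+t)} \E e^{\lambda Z_C}$ for $\lambda > 0$, reducing the task to controlling $\E e^{\lambda Z_C}$. The structural observation is that any admissible $\cJ$ (intersection graph with max degree at most $C$) can be greedily $(C+1)$-colored, giving a partition $\cJ = \cK_1 \cup \cdots \cup \cK_{C+1}$ in which each color class consists of pairwise disjoint $\alpha$'s, so that $\{Y_\alpha : \alpha \in \cK_c\}$ is a family of independent Bernoulli products. A generalized H\"older inequality across the $C+1$ color classes, combined with the elementary estimate $\E e^{s Y_\alpha} \le \exp((e^{s} - 1) \E Y_\alpha)$ valid for $0$-$1$ variables, would then yield $\E \exp(\lambda \sum_{\alpha \in \cJ} Y_\alpha) \le \exp[(e^{\lambda(C+1)} - 1)\mu / (C+1)]$ for each fixed admissible $\cJ$.

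The main obstacle is that $Z_C$ is a \emph{maximum} over admissible $\cJ$ rather than a single sum, so the per-$\cJ$ estimate above does not immediately bound $\E e^{\lambda Z_C}$. This is exactly the issue addressed by the framework of~\cite[Theorem~9]{APUT}: one reparametrizes the problem so that an optimal admissible witness is identified by a greedy/deletion procedure whose trajectory can be controlled in expectation, yielding the required uniform exponential moment bound. Granting this, a standard optimization in $\lambda$ of the form $\lambda(C+1) = \log(1+t/\mu)$ gives the first inequality in~\eqref{eq:C} with exponent $-\varphi(t/\mu)\mu/C$ (absorbing the $C$ versus $C+1$ discrepancy into the constant), and the second inequality in~\eqref{eq:C} follows from the elementary estimate $\varphi(x) \ge x^2/(2(1+x))$ applied with $x = t/\mu$ after a routine rearrangement.
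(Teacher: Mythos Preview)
The paper provides no proof of this statement at all: it is stated purely as a corollary of~\cite[Theorem~9]{APUT} and then used as a black box. Your proposal is therefore consistent with the paper's treatment in that you, too, ultimately defer the substantive first inequality to that reference; the additional heuristic sketch (greedy coloring plus H\"older for a fixed~$\cJ$, then the acknowledged obstacle that~$Z_C$ is a maximum over~$\cJ$) is extra context the paper does not supply. One small correction to your sketch: since the condition $\max_{\beta \in \cJ}|\{\alpha \in \cJ : \alpha \cap \beta \neq \emptyset\}| \le C$ counts~$\beta$ itself, the intersection graph of~$\cJ$ has maximum degree at most~$C-1$, so greedy coloring yields~$C$ (not~$C+1$) classes and no ``$C$ versus $C+1$'' discrepancy arises. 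Your derivation of the second inequality from the first via the elementary bound $\varphi(x) \ge x^2/(2(1+x))$ with $x=t/\mu$ is correct and is exactly what is intended.
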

%
%\begin{remark}\label{rem:C}
%For the uniform random graph $\Gnm$, \eqref{eq:C} remains valid for variables of form $Y_{\alpha} := \indic{\alpha \subseteq E(\Gnm)}$. 
%\end{remark}
%
\noindent 
The main observation is that, in every subgraph~$H \subseteq G_{n,p}$ with maximum degree at most~$D$, 
any star~$K_{1,r}$ shares edges with~$O(D^{r-1})$ other stars. 
For~$X_D$ this allows us to \emph{routinely} apply Theorem~\ref{thm:C} with Lipschitz-like parameter~$C=O(D^{r-1})$, 
making inequality~\eqref{eq:XD} plausible.  
For Theorem~\ref{thm:Krp} the crux is that our choice of~$D$ will ensure~$\mu/D^{r-1} = \Theta_r(\Phi)$, 
so~\eqref{eq:XD} suggests that~$X_D \le \mu + \eps\mu/2$ fails with probability at most~$e^{-\Omega_{r,\eps}(\Phi)}$.  
\begin{corollary}\label{cor:XD}
For all $n \ge 1$, $p \in(0,1]$ and $D,t > 0$ we have 
\begin{equation}\label{eq:XD}
\Pr(X_{D} \ge \mu + t/2) \le 
\exp\left(-\frac{\varphi(t/\mu)\mu}{16D^{r-1}}\right) \leq  \exp\left(-\frac{\min\{t^2/\mu,t\}}{48D^{r-1}}\right). 
\end{equation}
\end{corollary}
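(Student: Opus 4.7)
The plan is to realise $X_D$ as a Theorem~\ref{thm:C}-style sum $Z_C$ with dependency parameter $C=O(D^{r-1})$ and then directly apply~\eqref{eq:C} with deviation $t/2$; the two forms of~\eqref{eq:XD} will then follow from short calculus facts about $\varphi$. First, index potential stars by their edge-sets: let $\cI$ be the collection of edge-sets of copies of $K_{1,r}$ in the complete graph on $[n]$, let $\xi_e:=\indic{e\in E(\Gnp)}$ be the independent Bernoulli$(p)$ edge-indicators, and set $Y_\alpha:=\prod_{e\in\alpha}\xi_e$. Then $\sum_{\alpha\in\cI}\E Y_\alpha=\mu$, which puts us in the setting of Theorem~\ref{thm:C}. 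For any $H\subseteq\Gnp$ with $\Delta(H)\le D$, the family $\cJ_H:=\{\alpha\in\cI:\alpha\subseteq E(H)\}$ is exactly the set of $K_{1,r}$ in $H$, and $Y_\alpha=1$ for every $\alpha\in\cJ_H$, so the pointwise identification $X_D=\max_H|\cJ_H|$ will yield $X_D\le Z_C$ as soon as the degree constraint of Theorem~\ref{thm:C} is verified for all such $\cJ_H$.

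To check the constraint with $C=4D^{r-1}$, fix $\beta\in\cJ_H$. For each of the $r$ edges $e=\{u,v\}$ of $\beta$, any $\alpha\in\cJ_H$ containing $e$ has centre $u$ or $v$; once the centre $c\in\{u,v\}$ is fixed, the star is determined by choosing its remaining $r-1$ leaves from $N_H(c)\setminus\{v\}$, giving at most $\binom{D-1}{r-1}\le D^{r-1}/(r-1)!$ options. Summing over the $r$ edges of $\beta$ and the two centre choices yields
$|\{\alpha\in\cJ_H:\alpha\cap\beta\ne\emptyset\}|\le 2r\binom{D-1}{r-1}\le \bigl(2r/(r-1)!\bigr)\,D^{r-1}\le 4D^{r-1}$,
using $\max_{r\ge 2}\,2r/(r-1)!=4$. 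This is the only combinatorial step, and it mirrors the $4D_j^{r-1}$ estimate already used in~\eqref{eq:x:Decomp}, so no new obstacle appears.

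Applying Theorem~\ref{thm:C} with $C=4D^{r-1}$ and deviation $t/2$ gives $\Pr(X_D\ge\mu+t/2)\le\exp\bigl(-\varphi(t/(2\mu))\mu/(4D^{r-1})\bigr)$. To absorb the inner factor of $1/2$ I use the elementary estimate $\varphi(x/2)\ge\varphi(x)/4$ for $x\ge 0$, which follows from $f(x):=4\varphi(x/2)-\varphi(x)$ satisfying $f(0)=0$ and $f'(x)=\log\bigl((1+x/2)^2/(1+x)\bigr)\ge 0$; this delivers the first inequality of~\eqref{eq:XD}. The second inequality then follows from $\varphi(x)\ge\min\{x^2,x\}/3$ for $x\ge 0$, obtained by splitting at $x=1$ (for $x\in[0,1]$, $\varphi(x)-x^2/3$ vanishes at $0$ and has non-negative derivative $\log(1+x)-2x/3$; for $x\ge 1$, $\varphi(x)-x/3$ is positive at $x=1$ since $2\log 2-1>1/3$ and is increasing because $\log(1+x)\ge 1/3$), which upgrades $\varphi(t/\mu)\mu/(16D^{r-1})$ to $\min\{t^2/\mu,t\}/(48D^{r-1})$. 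There is no substantive obstacle here: the only real content is the combinatorial dependency count, and the remaining manipulations of $\varphi$ are routine.
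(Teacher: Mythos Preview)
Your proof is correct and essentially identical to the paper's: you set up $X_D\le Z_C$ via the same combinatorial dependency count (yielding $C=4D^{r-1}$), apply Theorem~\ref{thm:C} with deviation $t/2$, and use the same two calculus facts $\varphi(x/2)\ge\varphi(x)/4$ and $\varphi(x)\ge\min\{x,x^2\}/3$ to obtain both inequalities in~\eqref{eq:XD}. The only cosmetic differences are that you bound by $\binom{D-1}{r-1}$ rather than the paper's $\binom{\lfloor D\rfloor}{r-1}$ and spell out the derivative verifications for~$\varphi$ that the paper simply asserts.
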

\begin{proof}
Let~$\cK_{1,r}(G)$ contain all edge-subsets of~$G$ that are isomorphic to~$K_{1,r}$. 
Writing~$Y_\alpha:=\indic{\alpha \subseteq E(\Gnp)}$, there is a subgraph~$H \subseteq \Gnp$ with maximum degree at most~$\floor{D}$ such that~$X_{D} = \sum_{\alpha \in \cJ} Y_{\alpha}$ for ~$\cJ := \cK_{1,r}(H)$. 
Given $\beta \in \cJ$, we construct all edge-intersecting stars $\alpha \in \cJ$ as in the proof of Lemma~\ref{lem:det}, and infer 
\begin{equation}\label{eq:XD:cond}
\max_{\beta \in \cJ} |\{\alpha \in \cJ: \alpha \cap \beta \neq \emptyset\}| \le r \cdot 2 \binom{\floor{D}}{r-1} \le \frac{2rD^{r-1}}{(r-1)!} \le 4 D^{r-1} = : C.
\end{equation}
%where we used that $2r/(r-1)! \le 4$ for $r \ge 2$. 
It follows that~$X_{D} \le Z_C$, where $Z_C$ is defined as in Theorem~\ref{thm:C} with $\cI=\cK_{1,r}(K_n)$. 
It is well-known (and easy to check by calculus) %(see, e.g., the proof of Lemmas~14--15 in~\cite{APUT} and Theorem~2.1 in~\cite{JLR}) 
that for~$x \ge 0$ we have 
\begin{equation}\label{eq:varphi}
\varphi(x/2) \ge \varphi(x)/4 \qquad \text{ and } \qquad x^2 \ge \varphi(x) \ge \min\{x,x^2\}/3. %\ge x^2/(2+2x/3) 
\end{equation}
Putting things together, using Theorem~\ref{thm:C} and \eqref{eq:varphi} it follows that 
\begin{equation}\label{eq:XD:ineq}
	\Pr(X_{D} \ge \mu + t/2) \le \Pr(Z_C \ge \mu + t/2) \le \exp\left(-\frac{\varphi(t/\mu)\mu}{4C} \right) \le \exp\left(-\frac{\min\{t,t^2/\mu\}}{12C}\right) ,
\end{equation}
which completes the proof of~\eqref{eq:XD} by choice of~$C$ (see~\eqref{eq:XD:cond} above). 
\end{proof}
\noindent 
We shall estimate~$\Pr(\neg\cT(\beta,D,\eps \mu))$ via a union bound argument and   
the following upper tail estimate for~$N_{D_j}$. 
The technical assumption~\eqref{eq:ND:cond} intuitively 
ensures that vertices with degree at least~$D$ are unlikely. 
For Theorem~\ref{thm:Krp} the crux is that our choice of~$D$ will also ensure~$np/(e D_j) \le p^{\Omega(1)}$, 
so applications of inequality~\eqref{eq:ND} with~$x=\beta M/D_j$ 
suggest that~$\cT$ and thus~\eqref{eq:Nj} fails with probability at most~$n \cdot n^{-3} p^{\Omega(M)} \le n^{-2} p^{\Omega_{\eps}(\Phi)}$, say.
\begin{lemma}\label{lem:ND}
For all~$n \ge 1$, $p \in(0,1]$, and~$D > 0$ satisfying
\begin{equation}\label{eq:ND:cond}
\bigl(e^3np/D\bigr)^{D} \le n^{-8} 
\end{equation}
the following holds. For all~$x > 0$ we have 
\begin{equation}\label{eq:ND}
\Pr(N_{D_j} \ge x) \le \frac{1}{n^3} \left(\frac{np}{e \ceil{D_j}}\right)^{x D_j/2}\indic{D_j \le n} . 
\end{equation}
\end{lemma}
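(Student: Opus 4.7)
The plan is to prove this lemma via a first-moment (union bound) argument, carefully calibrated against the hypothesis~\eqref{eq:ND:cond}. First I would dispose of the trivial case $D_j > n$: then $K_{1,\lceil D_j \rceil}$ has more than $n$ vertices and cannot fit in~$G_{n,p}$, so $N_{D_j} = 0$ and the right-hand side of~\eqref{eq:ND} also vanishes via the indicator $\indic{D_j \le n}$. Assuming $D_j \le n$, I set $d := \lceil D_j \rceil$ and $\ell := \lceil x \rceil \ge 1$, noting that $\{N_{D_j} \ge x\} = \{N_{D_j} \ge \ell\}$ is equivalent to the existence of $\ell$ edge-disjoint copies of $K_{1,d}$ in $G_{n,p}$.

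Next, a union bound over all unordered $\ell$-collections of edge-disjoint $K_{1,d}$'s in $K_n$ (each star specified by its centre and leaf-set) gives
\[
\Pr(N_{D_j} \ge x) \;\le\; \frac{1}{\ell!}\left(n \binom{n-1}{d}\right)^{\ell} p^{\ell d} \;\le\; \left(\frac{en}{\ell}\right)^{\ell}\left(\frac{enp}{d}\right)^{\ell d},
\]
where I use $\binom{n-1}{d} \le (en/d)^d$ and $\ell! \ge (\ell/e)^\ell$. For the special case $\ell = 1$ I would instead use the sharper direct bound $\Pr(N_{D_j} \ge 1) \le n (enp/d)^d$, bypassing the division by $\ell!$, since the target estimate turns out to be tight there.

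Finally, to match with the target $n^{-3}(np/(ed))^{xD_j/2}$, set $L := \log(ed/(np))$ and observe that the hypothesis $(e^3 np/D)^D \le n^{-8}$, combined with $d \ge D_j \ge D$, yields $L \ge 4 + (8\log n)/D$. Taking logarithms of the desired inequality and using $xD_j \le \ell d$ (so $\ell d - xD_j/2 \ge \ell d/2$), the claim reduces to
\[
\ell d \,(L/2 - 2) \;\ge\; \ell \log(en/\ell) + 3 \log n.
\]
Plugging in $L/2 - 2 \ge (4\log n)/D$ and $d \ge D$, this further reduces to the elementary inequality $4 \ell \log n \ge \ell \log(en/\ell) + 3\log n$, which holds for $\ell \ge 2$ and $n \ge 2$. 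The case $\ell = 1$ is handled separately via the sharper bound, where the analogous reduction is $d(L - 4) \ge 8\log n$ and holds tightly from the hypothesis.

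The main obstacle is the precise bookkeeping in this last step: the constants in the hypothesis $(e^3 np/D)^D \le n^{-8}$ appear to be calibrated exactly, with the ``$e^3$'' absorbing the discrepancy between the natural exponent $\ell d$ from the union bound and the target exponent $xD_j/2$ (a factor of roughly~$2$), and the ``$n^{-8}$'' covering both the $n^3$-denominator of the target and the $(en/\ell)^\ell$-prefactor. The $\ell = 1$ case is essentially tight, which forces the use of the sharper union bound (without the $\ell!$ division) in that regime.
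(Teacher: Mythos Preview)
Your proof is correct and follows the same first-moment union-bound skeleton as the paper. The paper's execution is a bit cleaner: instead of taking logarithms and splitting into the cases $\ell=1$ versus $\ell\ge 2$, it uses the algebraic identity $(enp/d)^{d} = (e^3np/d)^{d/2}\,(np/(ed))^{d/2}$ together with the fact that $x\mapsto (e^3np/x)^x$ is decreasing for $x\ge e^2np$ to replace $d=\lceil D_j\rceil$ by $D$ and apply the hypothesis directly, yielding $(enp/d)^d \le n^{-4}(np/(ed))^{d/2}$ uniformly in $j$; this dispenses with the $\ell!$-division (and hence your separate $\ell=1$ endpoint) altogether.
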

\begin{proof}
As $\binom{m}{z} \le (me/z)^z$ for all integers~$m,z \ge 1$, 
by exploiting the disjointness condition of~$N_{D_j}$ we infer 
\begin{equation}\label{eq:NDjx:1}
\Pr(N_{D_j} \ge x) \le n^{\ceil{x}}\binom{n}{\ceil{D_j}}^{\ceil{x}} p^{\ceil{x}\ceil{D_j}} \le \left(n \left(\frac{enp}{\ceil{D_j}}\right)^{\ceil{D_j}}\right)^{\ceil{x}} 
\end{equation}
As the function~$x \mapsto (e^3np/x)^x$ is decreasing for~$x \ge e^2np$, and~\eqref{eq:ND:cond} implies $\ceil{D_j} \ge D \ge e^3np$, we deduce  
\begin{equation*}
	\left(\frac{enp}{\ceil{D_j}}\right)^{\ceil{D_j}} = \left(\frac{e^3np}{\ceil{D_j}}\right)^{\frac{\ceil{D_j}}{2}}\left(\frac{np}{e\ceil{D_j}}\right)^{\frac{\ceil{D_j}}{2}} \le \left(\frac{e^3np}{D}\right)^{\frac{D}{2}} \cdot \left(\frac{np}{e\ceil{D_j}}\right)^{\frac{\ceil{D_j}}{2}} \le n^{-4} \left(\frac{np}{e\ceil{D_j}}\right)^{\ceil{D_j}/2}.
\end{equation*}
Plugging this into \eqref{eq:NDjx:1} readily establishes inequality~\eqref{eq:ND}, since trivially~$N_{D_j}=0$ when~$D_j > n$. 
\end{proof}
For the proof of the upper bound of Theorem~\ref{thm:Krp} it remains to pick suitable~$D$, i.e.,  
which satisfies the technical assumption~\eqref{eq:ND:cond} and yields the `correct' exponent in~\eqref{eq:XD} and suitable applications of~\eqref{eq:ND}. 
\begin{proof}[Proof of the upper bound in~\eqref{eq:thm:Krp} of Theorem~\ref{thm:Krp}]
%We shall prove %(without using the assumption~$1 \le (1+\eps)\mu \leq X_{r,n,1}$) 
%a stronger form of the claimed upper bound, with~$\log(1/p)$ replaced by~$\log(e/p)$.  
For concreteness, define~$\beta := 1/32$ and~$\gamma := 1/(16r)$, as~well~as  
\[
A := \max\left\{e^4, \; 8/\gamma\right\}, \quad s := \log(e/p^{\gamma}) , \quad \text{and} \quad D := A \cdot \max\left\{1, \; \frac{\min\{\mu^{1/r}, n\}}{s^{1/(r-1)}}\right\} .
\]
For later reference, we record that there is a constant~$d=d(r) >0$ such that, for $n \ge n_0(r)$,  %for all $p \in [0,1]$ we have  
\begin{equation}\label{eq:mu:lb}
d n^{r+1}p^r \le \mu \le n^{r+1}p^r .
\end{equation}
By Lemma~\ref{lem:det}, the upper tail of the number~$X=X_{r,n,p}$ of~$K_{1,r}$-copies satisfies  
\begin{equation}\label{eq:thm:Krp:Pr:0}
\Pr(X \ge (1+\eps)\mu) \le \Pr(X_D \ge \mu+\eps\mu/2) + \Pr(\neg\cT(\beta,D,\eps \mu)) .
\end{equation}
Gearing up to bound~$\Pr(\neg\cT(\beta,D,\eps \mu))$ via Lemma~\ref{lem:ND}, 
using~$e = p^{\gamma}e^s$ and inequality~\eqref{eq:mu:lb} together with the bound~$s^{1/(r-1)} \le s = 1 + \log p^{-\gamma} \le p^{-\gamma}$ (as~$1+x \le e^{x}$) it follows that 
\begin{equation}\label{eq:win:logfactor}
\frac{np}{eD}= 
\frac{np^{1-\gamma} e^{-s}}{D} \le \indic{p \le n^{-1/(1-\gamma)}} \frac{e^{-s}}{A} + \indic{p > n^{-1/(1-\gamma)}}\frac{e^{-s}}{A \min\{d^{1/r}n^{1/r}p^{2\gamma}, \: p^{2\gamma-1}\}} 
\le \frac{e^{-s}}{A} 
\le e^{-s} ,
\end{equation}
where here and below we shall always tacitly assume $n \ge n_0(r,d)$ whenever necessary. 
Since the above calculation also gives~$D \ge A np^{1-\gamma}$, 
together with $D \ge A$ it follows that  
\[
\bigl(e^3np/D\bigr)^{D} \le (p^{\gamma}/e)^{D} \le \indic{p \le n^{-1}}n^{-A\gamma} + \indic{p > n^{-1}}e^{-Anp^{1-\gamma}} \le n^{-8} .
\]
Applying a union bound argument, using estimates~\eqref{eq:ND}, $D_j = 2^jD \ge D$, and~\eqref{eq:win:logfactor} it follows that
\begin{equation}\label{eq:T:fails}
\begin{split}
\Pr(\neg\cT(\beta,D,\eps\mu)) & \le \sum_{j \in \NN} \Pr(N_{D_j} \ge \beta M/D_j) 
 \le n \cdot \frac{1}{n^3} \left(\frac{np}{e D}\right)^{\beta M/2} \le \frac{1}{n^2} \cdot e^{-\beta Ms/2} . 
\end{split}
\end{equation}
Recalling~\eqref{eq:thm:Krp:Pr:0} and the definition of~$M=M(\eps\mu)$, by applying Corollary~\ref{cor:XD} with~$t := \eps\mu$ 
it follows that there is a constant $c=c(\beta,A,\gamma,r)>0$ and suitable parameters~$\zeta,\Pi>0$ such that 
\begin{equation}\label{eq:thm:Krp:Pr}
\begin{split}
\Pr(X \ge (1+\eps)\mu) 
%& \le \Pr(X_D \ge \mu+\eps\mu/2) + \Pr(\neg\cT(\beta,\gamma,D,\eps \mu))\\
& \le \exp\left(-\frac{\min\{\eps,\eps^2\}\mu}{48D^{r-1}}\right) + \frac{1}{n^2}\exp\left(-\frac{\beta Ms}{2}\right) \\
& \le (1+n^{-2}) \cdot \exp\Bigl(-c \underbrace{\min\{\eps, \: \eps^2, \: \eps^{1/r}\}}_{=: \zeta} \underbrace{\min\bigl\{\mu, \; \max\bigl\{\mu^{1/r},\: \mu/n^{r-1}\bigr\}s\bigr\}}_{=: \Pi}\Bigr) .
\end{split}
\end{equation}
We find the above upper tail estimate very satisfactory, but in the literature it has become standard %(usually by making assumptions on~$n$ or~$p$) 
to suppress multiplicative factors such as~$1+n^{-2}$ in~\eqref{eq:thm:Krp:Pr}, which is straightforward when~$c \zeta\Pi \ge 1$ holds (rescaling the exponent~$c \zeta\Pi$ by a factor of~$1/2$, say).  
In the remaining case~$1 > c \zeta\Pi$ Markov's inequality gives 
\[
\Pr(X \ge (1+\eps)\mu) \le \frac{1}{1+\eps} = 1 - \frac{\eps}{1+\eps} \le \exp\left(-\frac{\eps}{1+\eps}\right) \le \exp\Bigl(-\tfrac{c}{2}\min\{\eps,1\}\zeta \Pi\Bigr) .
\]
Finally, noting~$s = \log(e/p^{\gamma}) \ge \log(1/p^{\gamma}) = \gamma \log(1/p)$ then establishes the upper bound in~\eqref{eq:thm:Krp}. 
\end{proof}

\subsection{Extension of the argument to Theorem~\ref{thm:Kri} and~\ref{thm:Kripc}}\label{sec:coreext}
We now extend the arguments from Section~\ref{sec:core} to the upper bounds of Theorem~\ref{thm:Kri} and~\ref{thm:Kripc}. 
To obtain sub-Gausssian decay~$\varphi(\eps)\mu^2/\sigma^2$ in the exponent of tail-inequality~\eqref{eq:XD} for~$X_D$, 
in view of the well-known variance estimate~$\sigma^2 = \Theta_{r,\xi}((1+(np)^{r-1})\mu)$ from Remark~\ref{rem:variance} we here would like to pick~$D=\Theta_{r,\xi}(1+np)$ for some range of~$t=\eps\mu$. 
However this choice causes a major problem:\footnote{For~$D=\Theta_{r,\xi}(1+np)$ another problem is that the technical assumption~\eqref{eq:ND:cond} of Lemma~\ref{lem:ND} then breaks when~$np$ is close to~one, which partially explains why in the upcoming Theorem~\ref{thm:Kr} we shall exclude fairly small~$t$ when~$np \in (n^{-\gamma}, \gamma \log n)$.}
in the key estimate~\eqref{eq:win:logfactor} we can no longer win an extra log-factor (via $np/(eD) \le e^{-s}$) when we bound the~$N_{D_j}$ variables using~\eqref{eq:ND} from Lemma~\ref{lem:ND}.  
Our strategy for overcoming this obstacle is to refine the technical event~$\cT=\cT(\beta,D,t)$, by enforcing different upper bounds on~$N_{D_j}$ when~$D_j=2^jD$ is small (so that in the probabilistic arguments we automatically win an extra logarithmic factor, without destroying the combinatorial counting arguments from Lemma~\ref{lem:det}). 

Turning to the details, for~$\gamma, \beta, D,t > 0$ let~$\cTp=\cTp(\gamma,\beta,D,t)$ denote the `technical' event that 
\begin{alignat}{2}
\label{eq:Njs:mod}
N_{D_j} &< \frac{\beta Ms}{D_j} &\qquad&\text{for all $j \in \NN$ with $D_j < \min\{M,n\}/s^{1/(r-1)}$, \ and} \\
\label{eq:Nj:mod}
N_{D_j} &< \frac{\beta M}{D_j} &&\text{for all $j \in \NN$ with $D_j \ge \min\{M,n\}/s^{1/(r-1)}$,} 
\end{alignat}
where, in addition to the parameters~$M =\max\{t^{1/r}, t/n^{r-1}\}$ and $D_j = 2^j D$ from~\eqref{eq:par}, we tacitly used 
\begin{equation}\label{eq:par2}
%M =M(t) &:= \max\bigl\{t^{1/r}, \: t/n^{r-1}\bigr\},\\
s = s(\gamma) := \log(e/p^{\gamma}). %\\
%D_j =D_j(D) &:=2^j D.
\end{equation}
\begin{lemma}\label{lem:det2}
Given~$\beta \in (0,1/64]$ and~$\gamma,D,t > 0$, the event $\cTp(\gamma,\beta,D,t)$ implies~$X_{D} \le X \le X_{D} + t/2$. 
\end{lemma}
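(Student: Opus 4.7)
The plan is to mimic the proof of Lemma~\ref{lem:det} almost verbatim, changing only the final telescoping estimate so as to accommodate the two regimes in the definition of $\cTp$. Setting $\oM := \min\{M,n\}$ and taking $J \ge 0$ minimal with $D_J \ge \oM$, I would run the identical iterative sparsification $\Gnp = G_J \supseteq \cdots \supseteq G_0$, obtained by successively removing all edges incident to the centres of a maximal family $\cC_{j+1}$ of edge-disjoint stars $K_{1,\ceil{D_j}}$ in $G_{j+1}$. The degree bound $\Delta(G_j) \le D_j$ for $0 \le j \le J$ is proved exactly as before: in the base case $j = J$ one uses \eqref{eq:Nj:mod} (which applies because $D_J \ge \oM \ge \oM/s^{1/(r-1)}$, and coincides with~\eqref{eq:Nj}), while for $j < J$ the maximality of $\cC_{j+1}$ suffices. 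Since each removed edge of $G_{j+1}$ lies in at most $4 D_j^{r-1}$ copies of $K_{1,r}$, the same counting argument delivers
\[
X - X_D \;\le\; \sum_{0 \le j < J} 8\, N_{D_j}\, D_j^r .
\]

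The one genuinely new step is to split this sum at the threshold $D_* := \oM/s^{1/(r-1)}$ and apply the two bounds of $\cTp$ separately. In the small-scale regime $D_j < D_*$, inequality~\eqref{eq:Njs:mod} gives $N_{D_j} < \beta M s/D_j$, contributing at most $8 \beta M s \cdot D_j^{r-1}$ per term. Because $D_j = 2^j D$ grows geometrically and $r \ge 2$, the partial sum $\sum_{D_j < D_*} D_j^{r-1}$ is dominated by twice its largest term and is thus at most $2 D_*^{r-1} = 2 \oM^{r-1}/s$; the factor $s$ cancels, yielding a total small-scale contribution of at most $16 \beta M \oM^{r-1}$. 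In the large-scale regime $D_* \le D_j < \oM$, inequality~\eqref{eq:Nj:mod} gives $N_{D_j} < \beta M/D_j$ exactly as in Lemma~\ref{lem:det}, and the same geometric-sum trick bounds its contribution by another $16 \beta M \oM^{r-1}$. Combining the two regimes gives $X - X_D \le 32 \beta M \oM^{r-1}$, and a short case analysis according to whether $M \le n$ (so $M = t^{1/r}$ and $M\oM^{r-1} = M^r = t$) or $M > n$ (so $M = t/n^{r-1}$ and $M\oM^{r-1} = M n^{r-1} = t$) shows $M\oM^{r-1} = t$ in both cases. Hence $X - X_D \le 32 \beta t \le t/2$ for $\beta \le 1/64$, which together with the trivial $X_D \le X$ finishes the proof.

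The main (and essentially the only) subtlety I anticipate is verifying that the split threshold $D_* = \oM/s^{1/(r-1)}$ in the definition of $\cTp$ is correctly calibrated: the extra factor $s$ appearing in the small-scale bound~\eqref{eq:Njs:mod} must be \emph{exactly} cancelled by the $1/s$ that drops out of the geometric sum of $D_j^{r-1}$ truncated at $D_*$. A larger threshold would leave behind a multiplicative $s$ in the final estimate (destroying $X - X_D \le t/2$), while a smaller one would tighten the combinatorial side but weaken the probabilistic bound on $\Pr(\neg\cTp)$ that this refinement is designed to enable in Section~\ref{sec:coreext}. So the delicate point is the choice of $\cTp$ itself, not the mechanics of the counting argument.
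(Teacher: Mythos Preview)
Your proposal is correct and follows essentially the same route as the paper: the paper likewise says the proof of Lemma~\ref{lem:det} carries over verbatim except for the final telescoping estimate, then splits the sum $\sum_{0 \le j < J} 8 N_{D_j} D_j^r$ at the threshold $\oM/s^{1/(r-1)}$, bounds each geometric piece by $2\oM^{r-1}$ (so that the extra factor~$s$ from~\eqref{eq:Njs:mod} cancels against the $1/s$ in $D_*^{r-1}$), and arrives at $X - X_D \le 32\beta M \oM^{r-1} \le t/2$ for $\beta \le 1/64$. Your closing remark about the calibration of the threshold is exactly the point of the refined event~$\cTp$.
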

\begin{proof}
The proof of Lemma~\ref{lem:det} carries over, except for the final inequalities~\eqref{eq:x:Decomp}--\eqref{eq:x:Decomp2} that bound~$X$ from above.  
Recalling that~$\oM = \min \{M, n\}$, by mimicking the argument leading to~\eqref{eq:x:Decomp} we here obtain
\[
X - X_{D}\le \sum_{0 \le j < J} \bigl(2N_{D_j} D_j \cdot 4 D_j^{r-1}\bigr) 
\le 8\beta M \cdot \biggl(s\sum_{\substack{j \in \NN: D_j < \oM/s^{1/(r-1)}}}D_j^{r-1} + \sum_{\substack{j \in \NN: \oM/s^{1/(r-1)} \le D_j \le \oM}} D_j^{r-1}\biggr) .
\]
Recalling~$D_j=2^{j}D$ and $r \ge 2$, using~$\beta \le 1/64$ it then follows similarly to~\eqref{eq:x:Decomp}--\eqref{eq:x:Decomp2} that 
\[
X - X_{D}  %\le 16 \beta M \Bigl(s (\oM/ s^{1/(r-1)})^{r-1} + \oM^{r-1}\Bigr) \le 
\le 8\beta M \cdot 4 \oM^{r-1} \le 32 \beta \cdot \min\{M^r, M n^{r-1}\} \le t/2, 
\]
which completes the proof. 
\end{proof}

We are now ready to prove the following slightly more general upper tail estimate for the number~$X=X_{r,n,p}$ of~$K_{1,r}$-copies in~$G_{n,p}$, 
which (as we shall see) implies the upper bounds in Theorems~\ref{thm:Kri} and~\ref{thm:Kripc}.
\begin{theorem}[Upper tail bounds: technical result]\label{thm:Kr}
Given $r \geq 2$, let~$X=X_{r,n,p}$. Set $\mu:=\E X$, $\Lambda := \mu (1+(np)^{r-1})$, and $\varphi(x):=(1+x)\log(1+x)-x$. 
Given $\gamma > 0$, suppose that either 
\begin{equation*}
np\not\in(n^{-\gamma}, \gamma \log n)
\quad \text{ or } \quad 
	t^2/\mu \ge \indic{t \le \min\{\mu,n^r\}} \gamma  \min\bigl\{t^{1/r}(\log n)^r, \: Ms (\log n)^{r-1}\bigr\}
\end{equation*}
holds, 
where the parameters~$M$ and~$s$ are defined in~\eqref{eq:par} and~\eqref{eq:par2}.
Then we have 
\begin{equation}\label{eq:thm:Kr:U}
	\Pr(X \geq \mu+t)  \leq (1+n^{-1}) \cdot \exp\biggl(- \Omega_{r,\gamma} \Bigl( \min\Bigl\{\varphi(t/\mu)\mu^2/\Lambda, \; M \log (e/p)\Bigr\}\Bigr) \biggr).
\end{equation}
\end{theorem}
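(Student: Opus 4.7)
Plan: I would mirror the core argument of Section~\ref{sec:core}, replacing Lemma~\ref{lem:det} by Lemma~\ref{lem:det2} and the event~$\cT$ by the refined event~$\cTp(\gamma,\beta,D,t)$ (say with~$\beta := 1/64$), which by Lemma~\ref{lem:det2} yields
\[
\Pr(X \ge \mu+t) \;\le\; \Pr(X_D \ge \mu+t/2) \;+\; \Pr(\neg\cTp(\gamma,\beta,D,t)).
\]
As in Section~\ref{sec:core}, I would then bound the first summand by Corollary~\ref{cor:XD} and the second by a union bound over $j \in \NN$ using Lemma~\ref{lem:ND}. The crucial new ingredient is the choice of the degree parameter~$D$: since~$\Lambda/\mu = 1+(np)^{r-1} = \Theta_r((1+np)^{r-1})$, I want~$D = \Theta_{r,\gamma}(1+np)$, so that Corollary~\ref{cor:XD} converts into the sub-Gaussian exponent~$\Omega_{r,\gamma}(\varphi(t/\mu)\mu^2/\Lambda)$.

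Concretely, I would take
\[
D \;:=\; A \cdot \max\Bigl\{1+np,\;\min\{M,n\}/s^{1/(r-1)}\Bigr\}
\]
for a sufficiently large constant~$A = A(r,\gamma)$. The first term delivers the sub-Gaussian exponent above, and the second term is the same scale used for $D$ in the proof of Theorem~\ref{thm:Krp}, which via the same manipulation as in~\eqref{eq:win:logfactor} ensures $np/(eD_j) \le e^{-\Omega(s)}$ whenever~$D_j \ge \min\{M,n\}/s^{1/(r-1)}$; Lemma~\ref{lem:ND} then bounds the `hard' threshold~\eqref{eq:Nj:mod} by~$n^{-3}e^{-\Omega_{r,\gamma}(Ms)}$. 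For the `soft' threshold~\eqref{eq:Njs:mod}, which governs smaller~$D_j$, the bound $D_j \ge D \ge A(1+np)$ already forces $np/(eD_j) \le 1/(eA)$, and the extra factor~$s$ built into the threshold~$\beta Ms/D_j$ directly produces the same $e^{-\Omega_{r,\gamma}(Ms)}$ bound per~$j$. Summing the~$O(\log n)$ terms and absorbing the~$n^{-3}$ factors yields $\Pr(\neg\cTp) \le e^{-\Omega_{r,\gamma}(M \log(e/p))}$, which combined with the sub-Gaussian bound completes~\eqref{eq:thm:Kr:U}.

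The main obstacle will be verifying the technical hypothesis~\eqref{eq:ND:cond} of Lemma~\ref{lem:ND}, namely $(e^3np/D)^D \le n^{-8}$, for the above choice of~$D$. When~$np \le n^{-\gamma}$ the factor~$D \ge A$ alone suffices, since then $e^3np/D \le e^3 n^{-\gamma}/A$ is polynomially small; and when~$np \ge \gamma \log n$ the factor~$D \ge A(1+np) \ge A \gamma \log n$ makes $(e^3/A)^{A\gamma\log n} \le n^{-8}$ once~$A$ is large in terms of~$\gamma$. Precisely in the intermediate range~$np \in (n^{-\gamma}, \gamma \log n)$ neither argument suffices for this compact choice, and this is exactly what the second branch of the theorem's hypothesis handles: the additional assumption
\[
t^2/\mu \ge \indic{t \le \min\{\mu,n^r\}} \gamma \min\bigl\{t^{1/r}(\log n)^r,\; Ms(\log n)^{r-1}\bigr\}
\]
permits inflating~$D$ by a polylogarithmic factor (to restore~\eqref{eq:ND:cond}) without damaging~\eqref{eq:thm:Kr:U}, because it guarantees that the clustered term~$M \log(e/p)$ in the minimum is already no larger than the weakened sub-Gaussian exponent~$\varphi(t/\mu)\mu/D^{r-1}$; the~$(\log n)^{r}$ and~$(\log n)^{r-1}$ factors in the hypothesis are precisely calibrated to match the resulting loss in~$D^{r-1}$.

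Finally, the multiplicative $(1+n^{-1})$ in~\eqref{eq:thm:Kr:U} is absorbed exactly as at the end of the proof of Theorem~\ref{thm:Krp}: if the target exponent is at least~$1$ one rescales the implicit constant, and otherwise a trivial Markov argument $\Pr(X \ge \mu+t)\le \mu/(\mu+t)\le e^{-\min\{1,t/\mu\}/2}$ disposes of the case.
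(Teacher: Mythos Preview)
Your overall architecture is right, but your choice of~$D$ has a genuine gap. With your second term~$\min\{M,n\}/s^{1/(r-1)}=\overline{M}/s^{1/(r-1)}$, whenever this term dominates you get $D^{r-1}=A^{r-1}\overline{M}^{r-1}/s=A^{r-1}t/(Ms)$ (since $\overline{M}^{r-1}M=t$ by definition of~$M$), so Corollary~\ref{cor:XD} yields the exponent $\varphi(t/\mu)\mu Ms/(16A^{r-1}t)$. For $t\ll\mu$ this is roughly $(t/\mu)\cdot Ms$, which can be polynomially smaller than the target $\min\{\varphi(t/\mu)\mu^2/\Lambda,\,Ms\}$. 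Concretely, take $r=2$, $np=1$, $t=\mu^{0.9}\approx n^{0.9}$ (the hypothesis of the theorem is easily satisfied here): then $M=t^{1/2}=n^{0.45}$, $s=\Theta(\log n)$, your $D\approx A n^{0.45}/\log n$, and the Corollary exponent is $\Theta(n^{0.35}\log n)$, whereas the target minimum is $Ms=\Theta(n^{0.45}\log n)$. Your `inflate by a polylog' fix cannot help here, since the problem is that $D$ is already \emph{too large}, not too small. Note also that with your $D$ one always has $D_j\ge D\ge A\,\overline{M}/s^{1/(r-1)}>\overline{M}/s^{1/(r-1)}$, so the soft threshold~\eqref{eq:Njs:mod} is vacuous and $\cTp$ collapses to~$\cT$; this is a symptom that your $D$ does not actually exploit the refinement in Lemma~\ref{lem:det2}.

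The paper instead takes $D:=A\max\bigl\{1+np,\,(\varphi(t/\mu)\mu/(Ms))^{1/(r-1)}\bigr\}$, which is calibrated so that the Corollary~\ref{cor:XD} exponent equals $\Omega_{r,\gamma}(Ms)$ precisely when the second term is active, and $\Omega_{r,\gamma}(\varphi(t/\mu)\mu^2/\Lambda)$ when the first is; hence it always hits $\Omega_{r,\gamma}(\min\{\Psi,Ms\})$. With this smaller $D$ the soft threshold is genuinely in play, and the paper's bound on $\Pr(\neg\cTp)$ is not simply $e^{-\Omega(Ms)}$ but rather $n^{-1}\max\{e^{-\beta Ms/2},e^{-\Psi}\}$: the case $\overline{M}<np^{1-2\gamma}$ (where your `same manipulation as in~\eqref{eq:win:logfactor}' fails, since~$\overline{M}$ can be much smaller than~$\mu^{1/r}$) is handled via the crude estimate $\Pr(\neg\cTp)\le n^{-3}e^{-D}$ together with the verification that $D\ge\Psi$ there. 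Finally, the role of the second hypothesis is exactly to force the paper's second term to be at least $8\log n$ in the awkward range $np\in(n^{-\gamma},\gamma\log n)$, so that~\eqref{eq:ND:cond} holds.
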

\begin{proof} 
Let~$\beta := 1/64$. 
We distinguish the following three cases: (i)~$np \ge \gamma \log n$, (ii)~$np \le n^{-\gamma}$, and (iii)~$t^2/\mu \ge \indic{t \le \min\{\mu,n^r\}} \gamma  \min\{t^{1/r}(\log n)^r, Ms (\log n)^{r-1}\}$. 
Note that in all three cases we may assume~$\gamma \le 1/(16r)$, since decreasing~$\gamma$ yields a less restrictive assumption. Furthermore, in case~(iii) we may also assume that  $n^{-\gamma} \le np \le \log n$ holds (otherwise case~(i) or~(ii) apply).  
For concreteness, define
\[
	A:=\max\left\{e^4, \; 8 \cdot (3/\gamma)^{1/(r-1)}, \; 8/\gamma \right\} \quad  \text{and} \quad D := A \cdot \max\left\{1+np,\; \left(\frac{\varphi(t/\mu)\mu}{Ms}\right)^{1/(r-1)}\right\}  .
\]
(We remark that in cases~(i)--(ii) the simpler choice~$D = A (1+np)$ suffices.) 
We defer the somewhat technical proofs of the following claims regarding Lemma~\ref{lem:ND}:~(a)~assumption~\eqref{eq:ND:cond} holds, and (b)~inequality~\eqref{eq:ND} implies
\begin{equation}\label{eq:T:bounds}
	\Pr\bigl(\neg\cTp(\beta,\gamma,D,t)\bigr) \le \frac{1}{n} \max\Bigl\{e^{-\beta Ms/2}, \: e^{-\Psi} \Bigr\} \qquad \text{with} \qquad \Psi := \varphi(t/\mu)\mu^2/\Lambda,
\end{equation}
where here and below we shall again tacitly assume~$n \ge n_0(r)$. % whenever necessary. 
Analogously to inequalities~\eqref{eq:thm:Krp:Pr:0} and~\eqref{eq:thm:Krp:Pr}, by first applying Lemma~\ref{lem:det2} and Corollary~\ref{cor:XD}, 
and then using~$(1+np)^{r-1} = \Theta_r(\Lambda/\mu)$, it follows that 
\begin{equation*}\label{eq:Kr:E}
\begin{split}
\Pr(X \ge \mu+t) 
%& \le \Pr(X_D \ge \mu+t/2) + \Pr(\neg\cT(\beta,\gamma,D,t))\\
& \le \exp\left(-\frac{ \varphi(t/\mu)\mu}{16D^{r-1}}\right) + \textstyle\frac{1}{n}\exp\Bigl(-\frac{\beta}{2} \min\bigl\{\Psi, \: Ms\bigr\}\Bigr)
\le (1+n^{-1}) \cdot \exp\Bigl(-\Omega_{r,\gamma}\left( \min\bigl\{\Psi, \: Ms\bigr\}\Bigr) \right).
\end{split}
\end{equation*}
Since~$s = \log(e/p^{\gamma}) \ge \gamma \log(e/p)$, this establishes inequality~\eqref{eq:thm:Kr:U}. 

It remains to verify claims~(a)~and~(b) above, and start with claim~(a), i.e., that the assumption~\eqref{eq:ND:cond} of Lemma~\ref{lem:ND} holds. 
Note that $D \ge A (1+np) \ge e^4 np$. 
Furthermore, in case~(i) we have~$D \ge A\gamma \log n$, and in case~(ii) we have~$np \le n^{-\gamma}$ and~$D \ge A$. 
So, in both cases, using $A \ge 8/\gamma$ we infer
\begin{equation}\label{eq:Kr:c12}
\bigl(e^3np/D\bigr)^{D} \le \min\bigl\{e^{-D},(np)^{D}\bigr\} \le n^{-A\gamma} \le n^{-8} .
\end{equation}
Proceeding analogously, in the cumbersome case~(iii) it suffices to show~$D \ge 8 \log n$. 
Using~$\gamma \le 1/(16r)$, $p \ge n^{-1-\gamma}$ and~\eqref{eq:mu:lb}, it is routine to see that~$s \le \log n$ and~$\mu \ge n^{1/2}$.  
Assuming~$t \ge \mu$, by first using~\eqref{eq:varphi} and then distinguishing the cases~$t \ge n^{r}$ (where~$M=t/n^{r-1}$) and~$t \le n^{r}$ (where~$M=t^{1/r}$), it follows that 
\begin{equation}\label{eq:casedist}
	D^{r-1} \ge \frac{A^{r-1}\varphi(t/\mu)\mu}{Ms} \ge \frac{A^{r-1}t}{3 Ms} \ge \frac{A^{r-1}\min\bigl\{n^{r-1},\:  \mu^{1-1/r}\bigr\}}{3\log n} \ge (A\log n)^{r-1} .
\end{equation}
Assuming~$t \le \mu$, we note that assumption $p \le (\log n)/n$ implies $\mu \le n^r$ (hence~$t \le n^{r}$ and thus~$M=t^{1/r}$, as noted above). Hence, by first using~\eqref{eq:varphi} and then the assumed lower bound on~$t$ from case~(iii), we infer
\[
D^{r-1} \ge \frac{A^{r-1}\varphi(t/\mu)\mu}{Ms} \ge \frac{A^{r-1} t^2}{3 \mu Ms} 
\ge \frac{\gamma A^{r-1} \min \{t^{1/r}(\log n)^r, Ms (\log n)^{r-1}\}}{3  Ms}
= \gamma/3 \cdot (A\log n)^{r-1} .
\]
Each time~$D \ge 8\log n$ follows readily by definition of~$A$, establishing claim~(a), as discussed above. 

Finally, we verify claim~(b), i.e., that inequality~\eqref{eq:ND} implies estimate~\eqref{eq:T:bounds}. 
We start by observing that if $\cTp(\beta,\gamma,D,t)$  fails then a fortiori~$N_{D_0} \ge 1$.
Hence, using~\eqref{eq:ND} with~$x=1$ and~$D_0=D \ge e^3np$, we deduce   
\begin{equation}\label{eq:T:0}
	 \Pr(\neg\cTp(\beta,\gamma,D,t)) \le \Pr(N_{D_0} \ge 1) \le \frac{1}{n^3} \cdot e^{-D} . 
\end{equation}
Analogously to~\eqref{eq:T:fails}, using inequality~\eqref{eq:ND} and~$D_j = 2^jD \ge e^3np$ it also follows that 
\begin{equation}\label{eq:T:fails:1}
\begin{split}
\Pr(\neg\cTp(\beta,\gamma,D,\eps\mu)) & \le \sum_{j \in \NN: D_j \le \oM/s^{1/(r-1)}} \Pr(N_{D_j} \ge \beta M s/D_j) + \sum_{j \in \NN: D_j \ge \oM/s^{1/(r-1)}} \Pr(N_{D_j} \ge \beta M/D_j) \\
& \le \frac{1}{n^2} \cdot e^{-\beta Ms} + \frac{1}{n^2} \cdot \left(\frac{np}{e\bigceil{\oM/s^{1/(r-1)}}}\right)^{\beta M/2} .
\end{split}
\end{equation}
We now use a fairly technical case distinction to verify that the two estimates~\eqref{eq:T:0}--\eqref{eq:T:fails:1} together imply~\eqref{eq:T:bounds}. 
Assuming~$\oM \ge np^{1-2\gamma}$, analogously to the proof of~\eqref{eq:win:logfactor} we have $np s/\oM \le p^{2\gamma} s \le  p^{\gamma} = e^{1-s}$, so that 
\begin{equation}\label{eq:T:bounds:M}
	\left(\frac{np}{ e\bigceil{\oM/s^{1/(r-1)}}}\right)^{\beta M/2} \le  \left(\frac{np s}{e\oM}\right)^{\beta M/2} \le e^{-\beta Ms/2} \qquad \text{when }\oM \ge np^{1-2\gamma} .
\end{equation}
Next we assume $p \le n^{-1/(1-\gamma)}$, which implies $np/e\le p^{\gamma}/e= e^{-s}$, so that 
\begin{equation}\label{eq:T:bounds:p}
	\left(\frac{np}{ e\bigceil{\oM/s^{1/(r-1)}}}\right)^{\beta M/2} \le  \left(\frac{np}{e}\right)^{\beta M/2} \le e^{-\beta Ms/2} \qquad \text{when }p \le  n^{-1/(1-\gamma)}.
\end{equation}
In the remaining case~$\oM < np^{1-2\gamma}$ and~$p \ge n^{-1/(1-\gamma)}$ hold. 
Since $\oM < n$ implies $\oM = M$, we infer $t \le M^r= (\oM)^r \le n^rp^{r-2r\gamma}$. 
So, recalling that $\Psi \le t^2/\Lambda \le t^2/[(np)^{r-1}\mu]$ by~\eqref{eq:varphi} and that $\mu \ge dn^{r+1}p^r$ by~\eqref{eq:mu:lb}, using $D \ge np$, $p \ge n^{-1/(1-\gamma)}$ and $\gamma \le 1/(16r)$ we deduce that 
\[
	\frac{\Psi}{D} \le \frac{t^2}{(np)^r\mu} \le \frac{n^rp^{r-4r\gamma}}{\mu} \le \frac{1}{dn p^{4r\gamma}} \le \frac{1}{dn^{1-4r\gamma/(1-\gamma)}} \le \frac{1}{dn^{1/2}} \le 1 ,
\]
establishing~$D \ge \Psi$. It follows that 
\begin{equation}\label{eq:T:bounds:D}
	e^{-D} \le e^{-\Psi}  \qquad \text{when }\oM < np^{1-2\gamma} \text{ and } p \ge n^{-1/(1-\gamma)},
\end{equation}
which together with inequalities~\eqref{eq:T:0}--\eqref{eq:T:bounds:p} 
implies the claimed estimate~\eqref{eq:T:bounds}. 
\end{proof}
We now deduce the upper bounds of Theorem~\ref{thm:Kri} and~\ref{thm:Kripc} from the upper tail inequality~\eqref{eq:thm:Kr:U}. 
\begin{proof}[Proof of the upper bound in~\eqref{eq:thm:Kri} of Theorem~\ref{thm:Kri}]
Let~$\gamma:=1/(9r)$.  
For~$t := \eps \mu \ge n^{-\alpha} \mu$ and~$n \ge n_0(r)$ it is routine to check that~$t^{2-1/r}/\mu \ge \indic{np \ge n^{-\gamma}} \gamma (\log n)^r$ holds for~$\alpha=\alpha(r)>0$ sufficiently small.
Hence Theorem~\ref{thm:Kr} applies with~$t=\eps\mu$, where~$\Lambda=\Theta_{r,\xi}(\sigma^2)$ by Remark~\ref{rem:variance}. 
Using~$\Phi(\eps) \ge 1$ it follows that 
\begin{equation}\label{eq:thm:Kri:bound}
	\Pr(X \geq (1+\eps)\mu)  \leq (1+n^{-1}) \cdot e^{- \Omega_{r, \xi} ( \Phi(\eps))} \le e^{- \Omega_{r,\xi} ( \Phi(\eps) )} ,
\end{equation}
establishing the upper bound in~\eqref{eq:thm:Kri}. 
\end{proof}
\noindent 
For Theorem~\ref{thm:Kripc} we shall simplify the form of the exponent in~\eqref{eq:thm:Kr:U} via the following auxiliary result, 
writing~$a_n \asymp b_n$ instead of~$a_n = \Theta(b_n)$ for typographic reasons (the assumption~$p \ge n^{-9}$ in~(ii) is~ad-hoc). 
\begin{lemma}\label{cl:phi_to_t_square}
Given~$\xi \in (0,1)$, the following holds whenever~$p \in (0,1-\xi]$. 
\vspace{-0.5em}\begin{romenumerate}
 \item If $t \le \mu$, then 
\begin{equation}\label{eq:simpler_normal_term}
		t^2/\sigma^2 \asymp \varphi(t/\mu)\mu^2/\sigma^2 \asymp_{r,\xi} \varphi(t/\mu)\mu^2/\Lambda. 
\end{equation}
 \item If~$t \ge \mu$ and $t^{1-1/r} \ge (\log n)\indic{p < 1/n}$, then~$p \ge n^{-9}$ implies 
	\begin{equation}\label{eq:no_normal_term}
		t^2/\sigma^2 \ge \varphi(t/\mu)\mu^2/\sigma^2 \asymp_{r,\xi} \varphi(t/\mu)\mu^2/\Lambda = \Omega_{r,\xi}\bigl(M \log (e/p)\bigr). 
	\end{equation}
 \item If~$t^2/\sigma^2 \ge \min \{M, 1\}$ and~$\mu + t \ge 1$, then~$t = \Omega_{r,\xi}(1)$.
\end{romenumerate}
\end{lemma}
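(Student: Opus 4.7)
The plan is to derive all three claims from two facts already available in the paper: the two-sided bound $\min\{x,x^2\}/3 \le \varphi(x) \le x^2$ from~\eqref{eq:varphi}, and the variance asymptotic $\sigma^2 = \Theta_r((1-p)\mu(1+(np)^{r-1}))$ from Remark~\ref{rem:variance}, which under the hypothesis $p \le 1-\xi$ yields $\sigma^2 \asymp_{r,\xi} \Lambda$. This last equivalence turns the middle $\asymp_{r,\xi}$ in both~\eqref{eq:simpler_normal_term} and~\eqref{eq:no_normal_term} into a one-line observation.

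For~(i), the hypothesis $t \le \mu$ means $t/\mu \le 1$, so~\eqref{eq:varphi} collapses to $\varphi(t/\mu) \asymp (t/\mu)^2$; multiplying by $\mu^2$ gives $\varphi(t/\mu)\mu^2 \asymp t^2$, and then dividing by $\sigma^2$ and by $\Lambda$ finishes the chain. For~(iii), if $t \ge 1/2$ there is nothing to prove; otherwise $\mu + t \ge 1$ forces $\mu \ge 1/2$, so Remark~\ref{rem:variance} yields $\sigma^2 \ge c_{r,\xi}$, and $t < 1$ combined with $r \ge 2$ gives $M = \max\{t^{1/r},t/n^{r-1}\} = t^{1/r}$. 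The hypothesis $t^2/\sigma^2 \ge \min\{M,1\} = t^{1/r}$ then yields $t^{2-1/r} \ge c_{r,\xi}$, hence $t = \Omega_{r,\xi}(1)$.

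Part~(ii) is the main obstacle. The first inequality is the universal bound $\varphi(x) \le x^2$, and the middle $\asymp_{r,\xi}$ is again the variance comparison. The substantive task is the lower bound $\varphi(t/\mu)\mu^2/\Lambda = \Omega_{r,\xi}(M \log (e/p))$. Since $t/\mu \ge 1$, \eqref{eq:varphi} gives $\varphi(t/\mu) \ge (t/\mu)/3$, and combined with $\mu/\Lambda \asymp 1/(1+(np)^{r-1})$ the target reduces to
\[
\frac{t}{1+(np)^{r-1}} \;\ge\; c_{r,\xi}\, M \log (e/p) .
\]
I would verify this by case analysis. When $t \le n^r$ (so $M = t^{1/r}$) the inequality becomes $t^{1-1/r} \ge c_{r,\xi}(1+(np)^{r-1})\log(e/p)$. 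For $np \ge 1$, the lower bound $t \ge \mu = \Theta_r(n^{r+1}p^r)$ forces $t^{1-1/r}/(np)^{r-1} = \Omega_r(n^{(r-1)/r})$, which dominates $\log(e/p) = O(\log n)$ since $p \ge n^{-9}$. For $np \le 1$ the denominator is $O(1)$, so it suffices that $t^{1-1/r} = \Omega(\log(e/p))$: when $p \ge 1/n$ this follows from $t \ge \mu \ge cn$, and when $p < 1/n$ it is precisely the hypothesis $t^{1-1/r} \ge \log n$ (together with $\log(e/p) \le 10\log n$). Finally, in the range $t \ge n^r$ (so $M = t/n^{r-1}$) the required inequality simplifies to $n^{r-1}/(1+(np)^{r-1}) \ge c_{r,\xi}\log(e/p)$, which is straightforward under $p \le 1-\xi$ and $p \ge n^{-9}$ by splitting on $np \lessgtr 1$. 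The main source of friction is bookkeeping: keeping track of which of the hypothesis, the lower bound $t \ge \mu$, or the ambient $p \ge n^{-9}$ is doing the work in each subcase.
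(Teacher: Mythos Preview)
Your proposal is correct and follows essentially the same route as the paper. Parts~(i) and~(iii) are virtually identical to the paper's argument (same use of~\eqref{eq:varphi} and the variance asymptotic from Remark~\ref{rem:variance}); for the substantive lower bound in~(ii) the paper compresses your explicit case analysis into the single chain
\[
\frac{\varphi(t/\mu)\mu^2/\Lambda}{M} \ge \frac{t\mu}{3\Lambda M} \ge \frac{\min\{t^{1-1/r}, n^{r-1}\}}{6\max\{1,(np)^{r-1}\}} \ge \frac{1}{6}\min\Bigl\{\log n,\; \frac{\mu^{1-1/r}}{(np)^{r-1}},\; n^{r-1},\; \frac{1}{p^{r-1}}\Bigr\} = \Omega_r\bigl(\log(e/p)\bigr),
\]
but unpacking this min/max is exactly your four-way split on $t \lessgtr n^r$ and $np \lessgtr 1$, with the same ingredients ($t \ge \mu$, the hypothesis $t^{1-1/r} \ge \log n$ for $p<1/n$, $p \ge n^{-9}$ to control $\log(e/p)$, and $p^{r-1}\log(e/p)=O_r(1)$) doing the work in the corresponding subcases.
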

\begin{proof}
Inequality~\eqref{eq:simpler_normal_term} and the first two estimates of equation~\eqref{eq:no_normal_term} follow immediately from~\eqref{eq:varphi} and~$\Lambda = \Theta_{r,\xi}(\sigma^2)$, see Remark~\ref{rem:variance}. 
We now turn to the final inequality of equation~\eqref{eq:no_normal_term}. 
By combining~\eqref{eq:varphi} and $\Lambda/\mu = 1+(np)^{r-1}$ with~$M =\max\{t^{1/r}, t/n^{r-1}\}$ and~$t^{1-1/r} \ge (\log n) \indic{p < 1/n} + \mu^{1-1/r}\indic{p \ge 1/n}$, 
using~$p \ge n^{-9}$ and~$\mu^{1-1/r} = \Omega_r(n^{1/r} (np)^{r-1})$, see~\eqref{eq:mu:lb}, it follows similarly to~\eqref{eq:casedist} that 
\begin{equation*}
	\frac{\varphi(t/\mu)\mu^2/\Lambda}{M} \ge \frac{t\mu}{3\Lambda M} \ge \frac{\min\{t^{1-1/r},\: n^{r-1}\}}{6\max\{1, \: (np)^{r-1}\}} \ge \frac{1}{6}\min \left\{\log n, \; \frac{\mu^{1-1/r}}{(np)^{r-1}}, \; n^{r-1}, \; \frac{1}{p^{r-1}} \right\} = \Omega_r\bigl(\log(e/p)\bigr) ,
\end{equation*}
where we exploited that calculus gives~$p^{r-1} \log(e/p) = O_r(1)$; 
this completes the proof of claims~(i)--(ii).   

For claim~(iii) we may of course assume~$t \le 1/2$ (otherwise there is nothing to show).  
Hence~$t^2/\sigma^2 \ge \min \{M, 1\} \ge \min\{t^{1/r},1\} = t^{1/r}$ implies~$t^{2-1/r} \ge \sigma^2 = \Omega_{r,\xi}(\mu)$ by Remark~\ref{rem:variance}, 
which in turn gives~$t = \Omega_{r,\xi}(1)$, because~$t + \mu \ge 1$ and~$t \le 1/2$ together imply~$\mu \ge 1-t \ge 1/2$, completing the proof. 
\end{proof}
\begin{proof}[Proof of the upper bound in~\eqref{eq:thm:Krip} of Theorem~\ref{thm:Kripc}]
Applying Theorem~\ref{thm:Kr} (with~$\gamma = 1$), using~(i)--(ii) of Lemma~\ref{cl:phi_to_t_square} it follows that
inequality~\eqref{eq:thm:Kr:U} holds with~$\Omega_{r,\xi}(\Psi(t))$ in the exponent, 
where~$\Psi(t) \ge \min\{1, t^{1/r}\} = \Omega_{r,\xi}(1)$ by~(iii) of Lemma~\ref{cl:phi_to_t_square}. 
Absorbing the~$1 + n^{-1}$ factor similar to~\eqref{eq:thm:Kri:bound} then 
establishes the upper bound in~\eqref{eq:thm:Krip}. 
\end{proof}
\begin{proof}[Proof of the upper bound in~\eqref{eq:thm:Kripc} of Theorem~\ref{thm:Kripc}]
Since~$\sigma^2 = \Omega_{r,\xi}(\mu)$ by Remark~\ref{rem:variance}, note that the assumption 
\begin{equation}\label{eq:normal_more_cluster}
	t^2/\sigma^2 \ge M \log (e/p) \cdot (\log n)^{2r}
\end{equation}
implies~$t^2/\mu \ge M \log(e/p) \cdot (\log n)^{r-1}$, so that Theorem~\ref{thm:Kr} (with~$\gamma = 1$) applies. 
Using~\eqref{eq:normal_more_cluster}, by~(iii) of Lemma~\ref{cl:phi_to_t_square} we also infer that~$M \ge t^{1/r} =\Omega_{r,\xi}(1)$.  
Absorbing the~$1 + n^{-1}$ factor as before, % (using~$M \log (e/p) \ge M=\Omega_{r,\xi}(1)$), 
it remains to show that the exponent of inequality~\eqref{eq:thm:Kr:U} is~$\Omega_{r,\xi}(M \log (e/p))$. 
For~$t \le \mu$ this follows from~\eqref{eq:simpler_normal_term}~of Lemma~\ref{cl:phi_to_t_square} and~\eqref{eq:normal_more_cluster}. 
For~$t \ge \mu$ this follows from~\eqref{eq:no_normal_term}~of Lemma~\ref{cl:phi_to_t_square}, since~\eqref{eq:normal_more_cluster} and~$p < n^{-1}$ imply~$t^2/(\log n)^{2r+1} \ge \sigma^2 M = \Omega_{r,\xi}(\mu)= \Omega_{r,\xi}(1)$ and thus~$t^{1-1/r} \ge (\log n)\indic{p < 1/n}$, as required. 
\end{proof}

\subsection{Straightforward extension to a certain sum of iid variables}\label{sec:ext}
We close this section by recording that minor (and in fact simpler) variants of our proofs also apply~to the following sum of independent random variables:
\begin{equation}\label{eq:X:binom}
X:=\sum_{i \in [n]} \binom{Y_i}{r} \qquad \text{ with independent~$Y_i \sim \Bi(n,p)$.} 
\end{equation}
Indeed, in view of the structural similarities to the number of $r$-armed stars in~$G_{n,p}$ (which satisfies~$X_{n,r,p}=\sum_{v \in [n]}\binom{\mathrm{d}_v}{r}$, writing~$\mathrm{d}_v$ for the degree of~$v$), here we set $X_x := \sum_{i \in [n]: Y_i \le \floor{x}}\binom{Y_i}{r}$, and define~$N_x$ as the number of~$i \in [n]$ with~$Y_i \ge \ceil{x}$. 
Now the proofs of Lemma~\ref{lem:det} and~\ref{lem:det2} carry over with minor changes: exploiting that there are no dependencies between the $Y_i$, using a simple dyadic decomposition 
we here obtain 
\[
X \le X_{D} + \sum_{0 \le j < J} \biggl[N_{D_j} \cdot \binom{\floor{D_{j+1}}}{r}\biggr] \le X_{D} + 2\sum_{0 \le j < J} N_{D_j} D_j^{r} \le \cdots \le X_{D} + t/2.
\]
For the proof of Corollary~\ref{cor:XD} it suffices to show that $X_D \le Z_C$ holds in the present setting. 
Since $Y_i$ is a sum of $n$ independent indicators $\xi_{i,j}$, we may write each $\binom{Y_i}{r}$ as a sum of $\binom{n}{r}$ dependent indicators (which each are products of some~$r$ distinct independent variables~$\xi_{i,j}$). 
Using the constraint $Y_i \le \floor{D}$ the analogous left hand side of \eqref{eq:XD:cond} is thus bounded by $r \cdot \binom{\floor{D}}{r-1} \le 2 D^{r-1}$, which in turn implies $X_D \le Z_C$, as desired. 
Since the proof of Lemma~\ref{lem:ND} also remains valid (as inequality~\eqref{eq:NDjx:1} carries over), we thus arrive at the following result. 
\begin{theorem}[Upper tail bounds: an extension]\label{thm:ext}
The upper bounds on the upper tail~$\Pr(X \geq (1+\eps)\mu)$ from Theorem~\ref{thm:Krp}, \ref{thm:Kri}, \ref{thm:Kripc}, and~\ref{thm:Kr} remain valid 
for the random variable~$X$ defined in~\eqref{eq:X:binom}. 
\end{theorem}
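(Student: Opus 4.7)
The plan is to mirror the two-part scheme of Section~\ref{sec:upper}, replacing the graph-theoretic objects by their direct iid analogues. Concretely, I would set $X_x := \sum_{i \in [n]\colon Y_i \le \floor{x}}\binom{Y_i}{r}$ (the analogue of the number of $r$-stars in a subgraph of maximum degree~$\le x$) and $N_x := |\{i \in [n]\colon Y_i \ge \ceil{x}\}|$ (the analogue of a maximum edge-disjoint $K_{1,\ceil{x}}$ family in~$\Gnp$). With these redefinitions the events $\cT(\beta,D,t)$ and $\cTp(\gamma,\beta,D,t)$ from~\eqref{eq:Nj} and~\eqref{eq:Njs:mod}--\eqref{eq:Nj:mod}, as well as the parametrisation~\eqref{eq:par}, are meaningful verbatim. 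The goal is then to re-prove the four building blocks — Lemma~\ref{lem:det}, Lemma~\ref{lem:det2}, Corollary~\ref{cor:XD}, and Lemma~\ref{lem:ND} — in this setting; since the deductions of inequalities~\eqref{eq:thm:Krp}, \eqref{eq:thm:Kri}, \eqref{eq:thm:Krip}, \eqref{eq:thm:Kripc} and~\eqref{eq:thm:Kr:U} depend on~$X$ only through these lemmas, they will then transfer literally and give Theorem~\ref{thm:ext}.

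The combinatorial step becomes strictly simpler than in the graph setting, since the independence of the $Y_i$ removes any need for the iterative peeling from Lemma~\ref{lem:det}: one can partition $[n]$ directly by the dyadic range containing $Y_i$. Every index~$i$ with $Y_i > \floor{D}$ satisfies $Y_i \in [\ceil{D_j}, \floor{D_{j+1}}]$ for a unique $0 \le j < J$, hence
\begin{equation*}
X - X_D \;\le\; \sum_{0 \le j < J} N_{D_j} \binom{\floor{D_{j+1}}}{r} \;\le\; 2 \sum_{0 \le j < J} N_{D_j} D_j^{r},
\end{equation*}
using $\binom{\floor{D_{j+1}}}{r} \le 2^r D_j^r / r! \le 2 D_j^r$ for $r \ge 2$. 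Substituting the bound $N_{D_j} < \beta M/D_j$ supplied by~$\cT$ (or the refined bounds~\eqref{eq:Njs:mod}--\eqref{eq:Nj:mod} supplied by~$\cTp$) reproduces exactly the dyadic estimate of~\eqref{eq:x:Decomp}--\eqref{eq:x:Decomp2} and yields $X - X_D \le t/2$, with the same universal thresholds on~$\beta$ up to a harmless absolute-constant adjustment coming from the extra factor of~$D_j$ in $N_{D_j} D_j^{r}$ compared with $N_{D_j} D_j \cdot D_j^{r-1}$.

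For the probabilistic part I would recast $X_D$ into the framework of Theorem~\ref{thm:C}. Writing $Y_i = \sum_{j \in [n]} \xi_{i,j}$ with independent $\mathrm{Bern}(p)$ variables~$\xi_{i,j}$ and expanding $\binom{Y_i}{r} = \sum_{\alpha \in \binom{[n]}{r}} \prod_{j \in \alpha} \xi_{i,j}$ exhibits $X_D$ as $\sum_{(i,\alpha) \in \cJ^*} \prod_{j \in \alpha} \xi_{i,j}$ for the random family $\cJ^* := \{(i,\alpha)\colon Y_i \le \floor{D},\ \alpha \subseteq \{j : \xi_{i,j} = 1\}\}$. Two indices $(i,\alpha),(i',\alpha') \in \cJ^*$ share a variable only when $i=i'$ and $\alpha \cap \alpha' \neq \emptyset$, so the support constraint $Y_i \le \floor{D}$ bounds the dependency count by $r\binom{\floor{D}}{r-1} \le 2 D^{r-1}$; this gives $X_D \le Z_C$ with $C \le 2 D^{r-1}$, and Theorem~\ref{thm:C} then delivers Corollary~\ref{cor:XD} with only a constant-factor change. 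For the analogue of Lemma~\ref{lem:ND} the bound is even more direct: the events $\{Y_i \ge \ceil{D_j}\}$ are independent across~$i$, so a one-line union bound yields $\Pr(N_{D_j} \ge \ceil{x}) \le \binom{n}{\ceil{x}}\binom{n}{\ceil{D_j}}^{\ceil{x}} p^{\ceil{x}\ceil{D_j}}$, matching~\eqref{eq:NDjx:1}, after which the remainder of the proof of Lemma~\ref{lem:ND} is unchanged. I do not foresee any genuine obstacle here: the setup is strictly cleaner than the star one, and the only real care goes into the bookkeeping of constants in the dyadic tail sum to confirm that the thresholds $\beta \le 1/32$ of Lemma~\ref{lem:det} and $\beta \le 1/64$ of Lemma~\ref{lem:det2} remain sufficient (possibly after an absolute-constant adjustment).
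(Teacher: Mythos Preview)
Your proposal is correct and follows essentially the same route as the paper: the same redefinitions of $X_x$ and $N_x$, the same dyadic decomposition $X - X_D \le \sum_{0 \le j < J} N_{D_j}\binom{\floor{D_{j+1}}}{r} \le 2\sum_j N_{D_j}D_j^r$, the same expansion $Y_i = \sum_j \xi_{i,j}$ to feed $X_D$ into Theorem~\ref{thm:C} with $C \le 2D^{r-1}$, and the observation that~\eqref{eq:NDjx:1} carries over to recover Lemma~\ref{lem:ND}. One small expository slip: your parenthetical about an ``extra factor of $D_j$'' is vacuous since $N_{D_j}D_j^r = N_{D_j}D_j \cdot D_j^{r-1}$ identically --- in fact the iid bound has a constant $2$ in place of the paper's $8$, so the thresholds $\beta \le 1/32$ and $\beta \le 1/64$ carry over without any adjustment.
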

Perhaps surprisingly, we are not aware of any standard method or inequality (for sums of iid variables) which can routinely recover the upper tail bounds from Theorem~\ref{thm:ext}.  
Here one technical difficulty seems to be that each summand~$\binom{Y_i}{r}$ has an upper tail that decays slower than exponentially (for~$r \ge 2$), % indeed, roughly like~$e^{-x^{1/r}}$ 
which presumably is closely linked to the somewhat non-standard~$\log(1/p)$~term in the exponent.

\bigskip{\noindent\bf Acknowledgements.} 
We would like to thank Svante Janson for a helpful discussion, 
and the CPC referee report from June~2015 (on an earlier version of this paper) for suggestions concerning the presentation.

\small
\bibliographystyle{plain}

\normalsize

\begin{appendix}

\section{Appendix: Lower bounds on the upper tail}\label{sec:lower}
In this appendix we establish fairly routine lower bounds on the upper tail~$\Pr(X \geq (1+\eps)\mu)$ from Theorem~\ref{thm:Krp}, \ref{thm:Kri}, and~\ref{thm:Kripc} (omitting some straightforward details).  
Following~\cite{APUT} we obtain our lower bounds via the following three events: 
that many copies of~$K_{1,r}$ `cluster' on few edges (see Lemma~\ref{lem:cls} and~\ref{lem:cls:refined}), 
that most copies of~$K_{1,r}$ arise disjointly (see Lemma~\ref{lem:disjm} and~\ref{lem:disj}), and 
that~$\Gnp$ contains more edges than expected (see~Lemma~\ref{lem:edges}).

\subsection{Basic argument for Theorem~\ref{thm:Krp}}\label{sec:core:LB}
For Theorem~\ref{thm:Krp} we shall use two different lower bounds, and the first one 
is based on the idea that relatively few edges (which `cluster' in an appropriate way) can create fairly many stars~$K_{1,r}$. 
This is formalized by the following result, which implies~$\Pr(X_{r,n,p} \ge x) \ge \Pr(F \subseteq G_{n,p}) = p^{|E(F)|}$ since~$F \subseteq G_{n,p}$ enforces~$X_{r,n,p} \ge x$. 
\begin{lemma}[Clustering]\label{lem:cls}
For every $r \ge 1$ there is $D=D(r)>0$ so that for all~$n \ge 1$ and~$0 < x \le X_{r,n,1}$ there is~$F \subseteq K_n$ with~$|E(F)| \le D\max\{x^{1/r},x/n^{r-1},1\}$ edges such that $F$ contains at least~$x$ copies of~$K_{1,r}$. 
\end{lemma}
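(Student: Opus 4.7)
The approach is constructive, building $F$ as a complete bipartite graph $K_{y,z}$ (or a degenerate version: a star, or $K_n$) as outlined in Footnote~\ref{fn:bipartite}. The motivation is that in $K_{y,z}$ every vertex of the size-$z$ part is the centre of $\binom{y}{r}$ copies of $K_{1,r}$, giving $\ge z \binom{y}{r}$ stars while using only $yz$ edges; the choice $y = \Theta_r(\min\{x^{1/r}, n\})$ and $z = \Theta_r(x/y^r)$ then makes the star count at least $x$ and the edge total $yz = \Theta_r(\max\{x^{1/r}, x/n^{r-1}\})$, matching the target bound up to the constant term in the maximum.

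Concretely, I would split into three regimes of $x$. For $x \le 1$, take $F = K_{1,r}$: it has $r = O_r(1)$ edges and one copy of $K_{1,r}$, absorbing the constant~$1$ in the max. For $1 < x \le \binom{n-1}{r}$, take $F = K_{1,y}$ with $y$ the smallest integer satisfying $\binom{y}{r} \ge x$; a routine estimate gives $y = O_r(x^{1/r})$, and by the range assumption $y \le n-1$, so $F \subseteq K_n$ with $|E(F)| = y = O_r(x^{1/r})$. For $\binom{n-1}{r} < x \le X_{r,n,1}$, I would take $F$ to be the complete bipartite graph between disjoint subsets $A, B \subseteq [n]$ with $|A| = y$ and $|B| = z$, choosing $z$ as the smallest integer with $z\binom{n-z}{r} \ge x$ and setting $y = n - z$. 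As long as $z$ stays bounded by a small fraction of $n$, we have $\binom{y}{r} = \Theta_r(n^r)$, hence $z = \Theta_r(x/n^r)$ and $|E(F)| = yz = O_r(x/n^{r-1})$.

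The main technical obstacle is the very top of the range. Since $z \mapsto z\binom{n-z}{r}$ is maximised near $z = n/(r+1)$ and attains only $\Theta_r(n^{r+1}/(r+1)^{r+1})$ there, no bipartite graph $K_{y,z}$ with $y+z \le n$ can realise values of $x$ arbitrarily close to $X_{r,n,1} = \Theta_r(n^{r+1}/r!)$, so a pure bipartite construction does not cover the entire range. To handle this, I would add a fourth case: whenever $x$ exceeds a suitable threshold $c_r n^{r+1}$ (beyond which the bipartite construction fails), take $F = K_n$ itself, so that $|E(F)| = \binom{n}{2} \le n^2/2 = O_r(x/n^{r-1})$ follows immediately from $x \ge c_r n^{r+1}$.

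Finally, I would choose $D = D(r)$ as the maximum of the four case-specific constants, and verify at each regime boundary that the case-specific edge bound is of the form $O_r(\max\{x^{1/r}, x/n^{r-1}, 1\})$ with a uniform constant; the assumption $x \le X_{r,n,1}$ is used only in the last two regimes, to guarantee that $z \le n - y$ in the bipartite case and that $\binom{n}{2}$ does not exceed $D x/n^{r-1}$ in the $K_n$ case.
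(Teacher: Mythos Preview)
Your proposal is correct and follows essentially the same strategy as the paper: a complete bipartite construction $K_{y,z}$ for the main range, falling back to $K_n$ at the very top and to a fixed small graph at the bottom. The paper unifies your Cases~2 and~3 into a single construction with $y=\lceil\min\{x^{1/r},n\}/4\rceil$ and $z=\lceil r^r x/y^r\rceil$, and treats small~$n$ as a separate case; your split works equally well, though two points deserve an explicit line: that the minimal $z$ in Case~3 automatically satisfies $z\le n/(r+1)$ (since $z\mapsto z\binom{n-z}{r}$ is unimodal with peak near $n/(r+1)$), and that $c_r$ must be taken small enough (e.g.\ $c_r\le 1/(2r+1)^{r+1}$) so that for every $n$ either Case~3's interval is empty or $\max_z z\binom{n-z}{r}\ge c_r n^{r+1}$.
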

\noindent
Inspired by the proofs of Theorem~1.3 and~1.5 in~\cite{UTSG}, the idea is to use a complete bipartite graph~$F =K_{y,z}$ with $y = \Theta_r(\min\{x^{1/r},n\})$ and $z=\Theta_r(x/y^r)$, which contains $yz = \Theta_r(x/y^{r-1})= O_r(\max\{x^{1/r},x/n^{r-1}\})$ edges and at least $z \binom{y}{r} = \Theta_r(z y^r)= \Omega_r(x)$ copies of~$K_{1,r}$ (certain border cases require minor care).  
\begin{proof}[Proof of Lemma~\ref{lem:cls}]
Let~$x_0:=2 (4r)^r$, $n_0:=(r+1)x_0$, and~$D := n_0^2$. 
If~(i)~$x_0 \le x \le n^{r+1}/D$ and~$n \ge n_0$, then we let~$F:=K_{y,z}$, with~$y:=\ceil{\min\{x^{1/r},n\}/4}$ and~$z:=\ceil{r^r x/y^r}$. 
Note that~$F \subseteq K_n$ exists, since it is easy to check that~$1 < y \le n/2$ and~$1 < z \le n/2$, say (we leave the details to the reader). 
%(Such~$F \subseteq K_n$ exists, since~$1 < y \le n/2$ and~$1 < z \le n/2$, say.  % (omitting this check is the oversight in~\cite{Vu2001}).  
%Indeed, $y \ge \min\{x_0^{1/r},n_0\}/4 > 1$ implies~$y \le \min\{x^{1/r},n\}/2$ and thus~$z \ge r^r > 1$, so that $z \le 2r ^r x/y^r \le x_0 \max\{1,x/n^r\} \le \max\{x_0, x_0n/D\} \le n/2$.) 
Furthermore, $F$ contains at least~$z \binom{y}{r} \ge z (y/r)^r \ge x$ many~$K_{1,r}$,  
and~$|E(F)| = yz \le 2r^r x/y^{r-1} \le D \max\{x^{1/r},x/n^{r-1}\}$~edges. 

If either~(ii)~$1 \le n < n_0$ or~(iii)~$x > n^{r+1}/D$ and $n \ge n_0$, then we let~$F:=K_n$, which trivially contains~$X_{r,n,1} \ge x$ copies of~$K_{1,r}$, and~$|E(F)| < n^2 < \max\{n_0^2, Dx/n^{r-1}\} = D \max\{1,x/n^{r-1}\}$ edges. 

Finally, if~(iv)~$x < x_0$ and~$n \ge n_0$, then we let~$F:=K_{n_0}$, which contains at least~$n_0/(r+1) = x_0 > x$ vertex disjoint copies of $K_{1,r}$ and~$|E(F)| < n_0^2= D$ edges, completing the proof. 
\end{proof}

The second lower bound is inspired by the fact that~$X=X_{n,r,p}$ is approximately Poisson for small~$p$, in which case most~$K_{1,r}$ arise disjointly. 
Indeed, the following standard result bounds~$\Pr(X=m)$ from below by the probability that there are exactly~$m$ vertex-disjoint copies of~$K_{1,r}$ (see~\cite{KkTailDK,Matas2012,APUT} for similar arguments), 
which for~$m = (1+\eps)\mu$ will imply~$\Pr(X \ge m) \ge e^{-O_{r,\eps}(m)}$; the precise form of~\eqref{eq:disjm} will be useful later on. 
\begin{lemma}[Disjoint approximation]\label{lem:disjm}
Given~$r \ge 2$ there are $n_0,b>0$ (depending only on~$r$) such that, for all $n \ge n_0$, $0 < p \le n^{-1-1/(r+1)}$ and integers~$m \in \NN$ satisfying $0 \le m \le 99\max\{\mu,n^{1/(r+1)}\}$, we have 
\begin{equation}\label{eq:disjm}
\Pr(X =m)\ge e^{-b} \cdot \binom{X_{r,n,1}}{m}p^{rm}(1-p^r)^{X_{n,r,1}-m} .
\end{equation}
\end{lemma}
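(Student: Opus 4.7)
The plan is to bound $\Pr(X = m)$ from below by summing, over all collections $S$ of $m$ vertex-disjoint copies of $K_{1,r}$ in $K_n$, the probability that $G_{n,p}$ contains the edges of $S$ and no other copy of $K_{1,r}$. These events are pairwise disjoint across $S$ (given such a graph, $S$ is forced to be precisely the set of $K_{1,r}$-copies of $G_{n,p}$), so the sum is a valid lower bound.

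Two preparatory observations drive the calculation. First, $p \le n^{-1-1/(r+1)}$ yields $\mu = \Theta_r(n^{r+1}p^r) = O_r(n^{1/(r+1)})$, hence $m = O_r(n^{1/(r+1)})$ and in particular $m\cdot np = O_r(1)$ and $m(r+1) = o(n)$. Second, building the stars of $S$ one at a time (at stage $i$ there are $(n - i(r+1))\binom{n - i(r+1) - 1}{r}$ ordered choices), using $m(r+1)/n \to 0$ each such factor equals $(1 - o(1)) X_{r,n,1}$; dividing by $m!$ yields a count $T_m \ge (1 - o(1))\binom{X_{r,n,1}}{m}$ of the vertex-disjoint collections.

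For a fixed such $S$ with $|E(S)| = rm$, independence of edges gives
\[
\Pr\bigl(E(S) \subseteq G_{n,p} \text{ and no extra copy of } K_{1,r}\bigr) = p^{rm}\cdot \Pr\bigl(\text{no }K^* \notin S\text{ is present} \mid E(S) \subseteq G_{n,p}\bigr) .
\]
The conditional event is a decreasing intersection on the independent edges outside $E(S)$, so Harris's inequality lower-bounds the conditional probability by $\prod_{K^* \notin S}\bigl(1 - p^{|E(K^*)\setminus E(S)|}\bigr)$. Grouping $K^*$ by their overlap $\ell := |E(K^*)\cap E(S)|$ with $S$, this product factors as $(1-p^r)^{X_{r,n,1} - m}\cdot \prod_{\ell=1}^{r-1}\bigl((1-p^{r-\ell})/(1-p^r)\bigr)^{B_\ell}$, where $B_\ell$ counts potential stars $K^*$ with overlap exactly $\ell$. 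A direct enumeration (exploiting that vertex-disjointness of $S$ forces every $K^*$ to share edges with at most one star of $S$, either through its centre or through a single leaf) yields $B_\ell = O_r(m n^{r-\ell})$; combined with $np \le 1$ this gives $\sum_{\ell \ge 1}B_\ell\, p^{r-\ell} = O_r(m\cdot np) = O_r(1)$, so the residual product is bounded below by some $e^{-b'(r)}$.

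Putting the three pieces together and summing over $S$ gives
\[
\Pr(X = m) \ge T_m\cdot p^{rm}\cdot (1-p^r)^{X_{r,n,1}-m}\cdot e^{-b'} \ge e^{-b}\binom{X_{r,n,1}}{m}p^{rm}(1-p^r)^{X_{r,n,1}-m}
\]
for suitable $b = b(r)$ and $n \ge n_0(r)$. The main obstacle is the Harris-product step: one must carefully enumerate $K^* \notin S$ by overlap pattern (case-splitting on whether the centre of $K^*$ is a centre or a leaf of some star in $S$), and verify that the correction factor is bounded below by a constant depending only on $r$. This hinges precisely on the identity $m\cdot np = O_r(1)$, which is exactly why the parameter ranges $m \le 99\max\{\mu, n^{1/(r+1)}\}$ and $p \le n^{-1-1/(r+1)}$ are so carefully tuned in the statement.
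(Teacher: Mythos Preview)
Your proof is correct and essentially identical to the paper's: both sum over vertex-disjoint $m$-collections of stars, lower-bound the conditional ``no extra copy'' probability via Harris grouped by edge-overlap $\ell$ (obtaining $B_\ell = O_r(mn^{r-\ell})$ and hence a correction factor $e^{-O_r(mnp)} = e^{-O_r(1)}$), and compare the collection count to $\binom{X_{r,n,1}}{m}$. One small point to tighten: the passage from ``each factor equals $(1-o(1))X_{r,n,1}$'' to $T_m \ge (1-o(1))\binom{X_{r,n,1}}{m}$ tacitly needs the $m$ per-factor errors to accumulate to $O_r(m^2/n)=o(1)$, which follows from your $m=O_r(n^{1/(r+1)})$ (and is exactly the estimate the paper records).
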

\begin{proof} 
%Since Harris inequality~\cite{Harris1960} implies $\Pr(X =0) \ge (1-p^r)^{X_{n,r,1}}$, we henceforth assume $m \ge 1$. 
%Analogous to DeMarco and Kahn~\cite{KkTailDK} and {\v{S}}ileikis~\cite{Matas2012}, the idea is to bound $\Pr(X=m)$ from below by the probability that there are exactly $m$ vertex disjoint copies of $K_{1,r}$. 
Let $\cK$ contain all copies of $K_{1,r}$ in~$K_n$. Define $\fS_m$ as the collection of all $m$-element subsets of $\cK$ in which all stars~$K_{1,r}$ are vertex disjoint. 
Given~$\cC \subseteq \fS_m$, define $\cI_{\cC}$ as the event that all stars~$K_{1,r}$ of $\cC$ are present, and define $\cD_{\cC}$ as the event that none of the stars~$K_{1,r}$ in $\cK \setminus \cC$ are present. Note that 
\[
\Pr(X=m) \ge \sum_{\cC \in \fS_m} \Pr(\cI_{\cC}\text{ and } \cD_{\cC}) = \sum_{\cC \in \fS_m} \Pr(\cI_{\cC}) \Pr(\cD_{\cC} \mid \cI_{\cC}) \ge |\fS_m| p^{rm} \cdot \min_{\cC \in \fS_m} \Pr(\cD_{\cC} \mid \cI_{\cC}) .
\]
Distinguishing the number of edges in which each star~$\alpha \in \cK \setminus \cC$ overlaps with some star~$K_{1,r}$ from the vertex-disjoint collection~$\cC \in \fS_m$, using Harris inequality~\cite{Harris1960} and~$np =o(1)$ we routinely~obtain 
\[
 \Pr(\cD_{\cC} \mid \cI_{\cC}) \ge (1-p^r)^{X_{n,r,1}-m} \prod_{1 \le j < r}(1-p^{r-j})^{O_r(mn^{r-j})} \ge (1-p^r)^{X_{n,r,1}-m} e^{-O_r(mnp)} ,
\]
where $m np = O(\max\{n^{r+2}p^{r+1},n^{1+1/(r+1)}p\}) = O(1)$. 
Furthermore, with $(z-y)^y/y! \le \binom{z}{y} \le z^y/y!$, $1-x \ge e^{-2x}$ and $X_{n,r,1}=n \binom{n-1}{r}$ in mind, basic counting (and a short calculation) gives  
\[
|\fS_m| \ge \Biggl((n-m) \binom{n-(r+1)m}{r}\Biggr)^m \bigg/m! \ge \binom{X_{n,r,1}}{m} e^{-O_r(m^2/n)}.
\]
This completes the proof of \eqref{eq:disjm} since $m^2 = O(\max\{n^{2(r+1)}p^{2r},n^{2/(r+2)}\}) = O(n^{2/(r+2)})= o(n)$.
\end{proof}

Combining the above two results, we now prove the lower bound of Theorem~\ref{thm:Krp}. 
\begin{proof}[Proof of the lower bound in \eqref{eq:thm:Krp} of Theorem~\ref{thm:Krp}]
We shall tacitly assume~$n \ge n_0(r,\eps)$ whenever necessary. 
%(For the interested reader we mention that~$n \ge n_0(r)$ suffices by a more complicated argument similar to Theorem~4 in~\cite{APUT}.)  
Applying Lemma~\ref{lem:cls} with~$x:=(1+\eps)\mu$, there is~$F \subseteq K_n$ with~$|E(F)| \le O_{r,\eps}(\max\{\mu^{1/r},\mu/n^{r-1}\})$ edges that contains at least~$(1+\eps)\mu$ copies of~$K_{1,r}$. 
If~$\Phi = \max\{\mu^{1/r},\mu/n^{r-1}\} \log(1/p)$, then it follows that 
\begin{equation}\label{eq:thm:Krp:cluster}
\Pr(X \ge (1+\eps)\mu) \ge \Pr(F \subseteq G_{n,p})=p^{|E(F)|} 
%\ge p^{O_{r,\eps}(\max\{\mu^{1/r},\mu/n^{r-1}\})} 
\ge e^{-O_{r,\eps}(\Phi)}. 
\end{equation}
Otherwise~$\Phi=\mu$, which by a short calculation implies~$\mu \le (\log n)^{3}$, say (since~$\mu \ge (\log n)^{3}$ implies $p = \Omega_r(n^{-1-1/r}) \ge n^{-2}$ and thus $\max\{\mu^{1/r}, \mu/n^{r-1}\}\log(1/p) = O_r(\max \{\mu^{1/r}, \mu/n^{r-1}\} \log n) < \mu$). 
Applying Lemma~\ref{lem:disjm} with~$m:=\ceil{(1+\eps)\mu} < n^{1/(r+1)}$, 
using~$\binom{z}{y} \ge (z/y)^y$, $\mu=X_{n,r,1}p^{r}$, $1-x \ge e^{-2x}$ and~$m \ge (1+\eps)\mu \ge 1$ it follows that  
\begin{equation}\label{eq:thm:Krp:disj}
\Pr(X \ge (1+\eps)\mu) \ge \Pr(X=m) \ge e^{-O_r(1)} \cdot \left(\frac{\mu}{m}\right)^m e^{-2\mu} \ge e^{-\Theta_{r,\eps}(m)} \ge e^{-O_{r,\eps}(\Phi)} ,
\end{equation}
establishing the lower bound in~\eqref{eq:thm:Krp}. 
\end{proof}

\subsection{Refined arguments for Theorem~\ref{thm:Kri} and~\ref{thm:Kripc}}\label{sec:coreext:LB}
For Theorem~\ref{thm:Kri} and~\ref{thm:Kripc} we shall refine the previous two lower bounds, and also introduce a new third lower bound. Each time some care is needed to obtain the `correct' dependence on~$t=\eps \mu$ in the exponent, 
and we start by refining the `clustering' based lower bound from Lemma~\ref{lem:cls} and~\eqref{eq:thm:Krp:cluster}. 
\begin{lemma}[Refined clustering bound]\label{lem:cls:refined}
Given~$r \ge 1$ and~$\xi \in (0,1)$ there are~$n_0,c>0$ (depending only on~$r,\xi$) such that, for all $n \ge n_0$, $p \in (0,1-\xi]$ and $t \ge \sigma$ satisfying~$1 \le \mu + t \le X_{r,n,1}$, we have 
\begin{equation}\label{eq:cls:refined}
\Pr(X \ge \mu+t) \ge \exp\left(-c \max\{t^{1/r},t/n^{r-1}\}\log(1/p)\right) .
\end{equation}
\end{lemma}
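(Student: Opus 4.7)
The plan is to produce a small deterministic graph $F \subseteq K_n$ with $|E(F)| = O_r(M(t))$ edges and at least $Ct$ copies of $K_{1,r}$, for a large constant $C = C(r,\xi)$; to condition on the event $A_F := \{F \subseteq \Gnp\}$ (which has probability $p^{|E(F)|} \ge \exp(-O_r(M(t)\log(1/p)))$); and to use a second-moment argument to show that, conditionally, the random graph still typically contains close to~$\mu$ stars not lying entirely in~$F$, so that $X \ge \mu + t$ holds with probability bounded below by a constant. The key refinement over the coarser argument leading to~\eqref{eq:thm:Krp:cluster} is that the extra stars supplied by~$F$ must be comparable to~$t$ rather than to $\mu + t$, so that the edge-cost of~$F$ (and thus the probability of $A_F$) depends only on~$t$ and not on~$\mu$.

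First I would apply Lemma~\ref{lem:cls} with $x = Ct$ to construct~$F$. Writing $\cK$ for the set of all copies of $K_{1,r}$ in $K_n$ and decomposing
\[
\E[X \mid A_F] \,\ge\, X(F) + p^r \cdot \#\{S \in \cK : E(S) \cap E(F) = \emptyset\},
\]
the observation that each edge lies in $2\binom{n-1}{r-1}$ members of~$\cK$ implies that at most $O(|E(F)| n^{r-1})$ elements of~$\cK$ meet~$E(F)$. Combined with $\mu = p^r \cdot X_{r,n,1} = \Theta_r(n^{r+1}p^r)$ this gives
\[
\E[X \mid A_F] \,\ge\, Ct + \mu - O_r\!\bigl(\mu\,|E(F)|/n^2\bigr) \,\ge\, \mu + (C - O_r(1))\, t,
\]
provided that $\mu M(t)/n^2 = O_r(t)$. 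A direct case analysis splitting on whether $M(t) = t^{1/r}$ or $M(t) = t/n^{r-1}$, combined with the variance formula $\sigma^2 = \Theta_{r,\xi}(\mu(1+(np)^{r-1}))$ from Remark~\ref{rem:variance}, shows that this last inequality does follow from the hypotheses $t \ge \sigma$ and $p \le 1-\xi$.

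Next I would estimate the conditional variance. Under $A_F$ the count~$X$ is still a sum of the indicators $Y_S := \indic{E(S) \subseteq E(\Gnp)}$ for $S \in \cK$, and $\mathrm{Cov}(Y_S, Y_{S'} \mid A_F)$ is nonzero only when $S$ and $S'$ share at least one edge outside~$E(F)$; a routine pair-counting argument then gives $\Var(X \mid A_F) \le O_{r,\xi}(\sigma^2)$. Chebyshev's inequality now yields
\[
\Pr(X < \mu + t \mid A_F) \,\le\, \frac{\Var(X \mid A_F)}{\bigl((C - O_r(1))\,t\bigr)^2} \,\le\, \frac{O_{r,\xi}(1)}{C^2} \,\le\, \tfrac{1}{2}
\]
for $C = C(r,\xi)$ sufficiently large, where the final inequality uses $t \ge \sigma$. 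Combining this with $\Pr(A_F) \ge \exp(-O_r(M(t)\log(1/p)))$ establishes~\eqref{eq:cls:refined}.

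The main obstacle will be the verification of $\mu M(t)/n^2 = O_r(t)$ under the sole hypothesis $t \ge \sigma$. This inequality is essentially tight in both extreme regimes of $p$: for small~$p$ one uses $\sigma = \Omega_\xi(\sqrt{\mu})$, whereas for $np \gtrsim 1$ one needs the stronger bound $\sigma = \Omega(\sqrt{\mu (np)^{r-1}})$, and only when both are combined does one cover the full range $p \in (0,1-\xi]$. Secondary bookkeeping issues include handling the $+1$ term in Lemma~\ref{lem:cls}'s edge bound when $t = O(1)$, and ensuring $Ct \le X_{r,n,1}$ when $\mu + t$ is close to $X_{r,n,1}$; both can be absorbed by tuning~$C$ and a brief separate case analysis.
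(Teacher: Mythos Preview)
Your overall plan---plant a graph~$F$ with $\Theta(t)$ stars and $O_r(M(t))$ edges, then show the remaining stars typically number at least $\mu - O(t)$---matches the paper's sketch; the paper phrases the second step via Harris' inequality plus one-sided Chebyshev rather than via conditioning, but the ideas coincide.

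There is, however, a genuine gap in your conditional variance step. The claim $\Var(X \mid A_F) \le O_{r,\xi}(\sigma^2)$ is false in general: conditioning on $A_F$ creates a few vertices of very high degree, and pairs of stars through such a vertex have conditional covariance inflated by a factor of~$p^{-k}$, where $k = |(E(S)\cup E(S'))\cap E(F)|$. Concretely, take $r=2$, $p = 1/(2n)$, $t = n^2$ (so $\mu=\Theta(n)$, $\sigma^2 = \Theta(n)$, and all hypotheses hold). The construction of Lemma~\ref{lem:cls} gives $F = K_{y,z}$ with $y \asymp n$ and $z = \Theta_C(1)$. For a $z$-side vertex~$v_0$, pairs of stars $S = \{v_0w, v_0a\}$, $S'=\{v_0w, v_0c\}$ with $w\notin N_F(v_0)$ and $a,c\in N_F(v_0)$ each have conditional covariance $p(1-p)=\Theta(1/n)$ (not $p^3$), and there are $\Theta(n^3)$ such pairs per choice of~$v_0$; this alone contributes $\Theta(n^2)$ to $\Var(X\mid A_F)$, which is~$\gg \sigma^2$. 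So the ``routine pair-counting argument'' does not give what you claim.

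The fix is easy and in fact aligns better with your own expectation bound (which already only counts stars edge-disjoint from~$F$): run Chebyshev on $\tilde X := \#\{S\in\cK: E(S)\cap E(F)=\emptyset,\ E(S)\subseteq E(\Gnp)\}$ instead of on~$X$. Since $\tilde X$ depends only on edges outside~$E(F)$, it is independent of~$A_F$, so $\Var(\tilde X\mid A_F)=\Var(\tilde X)\le \sigma^2$ (the last inequality because $\tilde X$ is a partial sum of the~$Y_S$ and all pairwise covariances are nonnegative). On $A_F$ one has $X \ge X(F) + \tilde X$, and your expectation computation already shows $\E\tilde X \ge \mu - O_r(t)$, so Chebyshev on~$\tilde X$ completes the argument exactly as you intended. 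This is effectively the paper's route: once $\tilde X$ and $A_F$ are independent, Harris' inequality and direct conditioning are the same thing. (The paper also caps the number of planted stars at $\min\{(\beta+1)t,\lceil\mu+t\rceil\}$; keep this in mind for your ``separate case analysis'' when $Ct$ approaches~$X_{r,n,1}$.)
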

\noindent
In case of~$p=o(1)$ the basic proof idea is to obtain~$\mu+t$ copies of~$K_{1,r}$ as follows: (i)~we first use the clustering construction from Lemma~\ref{lem:cls} to plant~$2t$ copies of~$K_{1,r}$, and (ii)~then use Harris' inequality and a one-sided Chebychev's inequality to show that typically~$\ge \mu- t$ of the \emph{remaining} $\tilde{X}_{n,r,1}:=X_{n,r,1}-2t$ other copies of~$K_{1,r}$ are present (the crux is that the \emph{expected} number of such copies is~$\tilde{X}_{n,r,1} p^{r} = \mu-o(t)$, so having~$\ge \mu -t$ of them intuitively seems likely). 
For the resulting lower bound step~(i) with probability~$p^{O_r(\max\{t^{1/r},t/n^{r-1}\})}$ thus ought to give the main contribution, making~\eqref{eq:cls:refined} plausible. 
For technical reasons, in the actual argument we have to plant~$\min\{(\beta+1)t,\ceil{\mu+t}\}$ copies of~$K_{1,r}$ for carefully chosen~$\beta>0$. 
By mimicking the proof of Theorem~21 in~\cite{APUT} we then easily arrive at~\eqref{eq:cls:refined} above; we leave the details to the reader. 

We next refine the `disjoint approximation' based lower bound used in Lemma~\ref{lem:disjm} and~\eqref{eq:thm:Krp:disj} for small~$p$.  
The idea is that inequality~\eqref{eq:disjm} intuitively relates~$X=X_{n,r,p}$ to a binomial random variable with mean~$\mu=X_{n,r,1}\cdot p^r$, 
which makes the following Chernoff-type bound for the upper tail plausible. 
\begin{lemma}[Disjoint approximation: Chernoff-type lower bound]\label{lem:disj} 
Given~$r \ge 2$ there are~$n_0,c,d>0$ (depending only on~$r$) such that,  
for all $n \ge n_0$, $0 < p \le n^{-1-1/(r+1)}$ and~$t > 0$ satisfying~$1 \le \mu + t \le 9\max\{\mu,n^{1/(r+1)}\}$, we have 
\begin{equation}\label{eq:disj}
\Pr(X \ge \mu + t) \ge d \exp\left(-c\varphi(t/\mu) \mu  \right) . %\ge d \exp\bigl(- ct^2/\mu \bigr) .
\end{equation}
\end{lemma}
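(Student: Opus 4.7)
The plan is to combine Lemma~\ref{lem:disjm} with a classical lower bound on the Poisson upper tail, made rigorous via Poisson approximation in the sparse regime $p \le n^{-1-1/(r+1)}$. I would set $N := X_{r,n,1} = \Theta_r(n^{r+1})$ and $q := p^r$, so that $Nq = \mu$, and fix the integer $m^* := \lceil \mu + t \rceil$. The hypothesis $\mu + t \le 9\max\{\mu, n^{1/(r+1)}\}$ places $m^*$ safely inside the admissible range $[0, 99\max\{\mu, n^{1/(r+1)}\}]$ of Lemma~\ref{lem:disjm}, whose application gives
\[
\Pr(X \ge \mu + t) \ge \Pr(X = m^*) \ge e^{-b}\binom{N}{m^*} q^{m^*}(1-q)^{N-m^*} = e^{-b}\Pr(\Bin(N,q) = m^*).
\]

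Next, I would pass from the binomial to the Poisson point mass. In this regime one has $\mu = O_r(n^{1/(r+1)})$, $(m^*)^2/N = o(1)$, and $Nq^2 = \mu q = o(1)$, all immediate from the hypotheses. Expanding $\binom{N}{m^*} q^{m^*} = \frac{\mu^{m^*}}{m^*!} \prod_{j<m^*}(1-j/N)$ together with $(1-q)^{N-m^*} \ge e^{-\mu - o(1)}$ yields
\[
\binom{N}{m^*} q^{m^*}(1-q)^{N-m^*} \ge (1-o(1)) \frac{e^{-\mu}\mu^{m^*}}{m^*!}.
\]
Applying Stirling to $m^*!$ and using the identity $\mu \varphi(m^*/\mu - 1) = m^*\log(m^*/\mu) - (m^* - \mu)$ then gives
\[
\frac{e^{-\mu}\mu^{m^*}}{m^*!} \ge \frac{C}{\sqrt{m^*}} \exp\bigl(-\mu\varphi(m^*/\mu - 1)\bigr)
\]
for an absolute constant $C > 0$, and convexity of $\varphi$ (with $\varphi'(x) = \log(1+x)$) absorbs the integer rounding via $\mu\varphi(m^*/\mu - 1) \le \mu\varphi(t/\mu) + \log\bigl((m^*+1)/\mu\bigr)$.

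The last step is a case analysis to absorb the polynomial prefactor $1/\sqrt{m^*}$ and the additive correction $\log((m^*+1)/\mu)$ into the exponential. When $\mu\varphi(t/\mu)$ dominates a sufficiently large constant multiple of $\log(m^*+1)$, both factors are swallowed by slightly enlarging $c$. In the complementary regime $\mu\varphi(t/\mu) = O(\log(m^*+1))$, the target lower bound $d\exp(-c\mu\varphi(t/\mu))$ is at worst a fixed polynomial in $1/(m^*+1)$, and I would instead invoke a Paley--Zygmund second moment argument for $X$ (exploiting $\Var X = O_r(\mu)$ from Remark~\ref{rem:variance}) to produce a matching polynomial-in-$n$ lower bound on $\Pr(X \ge \mu + t)$. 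The main obstacle will be tuning this case distinction cleanly at its boundary, especially when $\mu$ is of order~$1$ or smaller: there the integer rounding in $m^*$ meaningfully perturbs the exponent, and the Paley--Zygmund and Stirling estimates must be calibrated to share a common pair of constants $(c, d)$ across the whole parameter window.
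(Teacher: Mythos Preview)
Your route through Lemma~\ref{lem:disjm}, Poisson approximation of the binomial point mass, and Stirling is correct and is essentially what the paper does. The one difference is that the paper \emph{sums}~\eqref{eq:disjm} over~$m$ (deferring to Theorem~22 in~\cite{APUT}) rather than keeping only the single term $m = m^*$; this yields a genuine binomial tail $e^{-b}\Pr\bigl(\Bin(N,q) \ge m^*\bigr)$, for which reverse Chernoff bounds of the form~\eqref{eq:disj} are standard.

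Your proposed handling of the residual $1/\sqrt{m^*}$ prefactor via a Paley--Zygmund argument is a real gap. Paley--Zygmund bounds $\Pr(X > \theta\,\E X)$ from below only for $\theta < 1$; no form of the second-moment method produces a lower bound on an \emph{upper} tail $\Pr(X \ge \mu + t)$. The failure is concrete: take $\mu \to \infty$ (achievable here, since $\mu$ can be as large as $\Theta_r(n^{1/(r+1)})$) and $t = 1$. Then $\mu\varphi(t/\mu) = \mu\varphi(1/\mu) \to 0$, so the target~\eqref{eq:disj} is a fixed positive constant~$d$, while your single Poisson point mass at $m^*$ is only $\Theta(\mu^{-1/2})$; no variance-based inequality bridges this. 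The clean repair is to sum the estimate from Lemma~\ref{lem:disjm} over the window $m^* \le m \le m^* + \lfloor\sqrt{\mu}\rfloor$ (all safely within the admissible range $[0,99\max\{\mu,n^{1/(r+1)}\}]$), which picks up $\Theta(\sqrt{\mu})$ terms each of comparable size and hence restores a constant prefactor. This is precisely why the paper sums rather than takes a single term.
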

%\begin{remark}\label{rem:disj}
%By Remark~\ref{rem:variance}, $p \le n^{-1-1/(r+1)}$ implies $\mu \asymp_r \mu^2/\sigma^2$. 
%\end{remark}
%
\noindent
Noting the binomial-like form of inequality~\eqref{eq:disjm} it is routine to check that Lemma~\ref{lem:disjm} indeed implies~\eqref{eq:disj} above 
(e.g., by summing~\eqref{eq:disjm} as in the proof of Theorem~22 in~\cite{APUT}); 
we leave the details to the reader. % and mention in passing that~\eqref{eq:disj} ought to remain valid for ~$p=O(n^{-1})$. 

Our third lower bound for moderately large~$p$ 
it is based on the idea that a deviation in the number of edges should typically entail a deviation in the number of~$K_{1,r}$ copies 
(in concrete words: if~$G_{n,p}$ has substantially more than~$\binom{n}{2}p$ edges, then we expect to have more~$K_{1,r}$ copies than on~average). 
\begin{lemma}[Deviation in number of edges: sub-Gaussian type lower bound]\label{lem:edges}
Given~$r \ge 2$ and~$\xi \in (0,1)$ there are~$n_0,\beta,c>0$ (depending only on~$r,\xi$) such that, 
setting $\Lambda := \mu(1+(np)^{r-1})$, 
for all~$n \ge n_0$, $\xi n^{-1} \le p \le 1-\xi$ and~$\sigma \le t \le \beta\mu$ we have
\begin{equation}\label{eq:edges}
\Pr(X \ge \mu+t)\ge \exp\bigl(-c\varphi(t/\mu) \mu^2/\Lambda\bigr) \ge \exp\bigl(-ct^2/\Lambda\bigr) .
\end{equation}
\end{lemma}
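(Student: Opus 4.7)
\medskip

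\noindent
\textbf{Proof plan for Lemma~\ref{lem:edges}.} The plan is to implement the heuristic from the paragraph preceding the lemma via a change-of-measure (tilting) argument: we sample from $G_{n,q}$ with $q$ slightly larger than $p$ chosen so that $\mathbb{E}_q X$ exceeds $\mu+t$ by a safe margin, verify with a second-moment calculation that $X\geq \mu+t$ already happens with constant probability under $G_{n,q}$, and then pay the Radon--Nikodym price to switch back to $G_{n,p}$. The sub-Gaussian exponent $t^2/\Lambda$ will emerge as (a constant times) the KL-divergence between the two product measures.

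\smallskip

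First I would choose $\delta:=Ktp/(r\mu)$ for a sufficiently large constant $K=K(r,\xi)$, and set $q:=p+\delta$. For $\beta$ small enough we have $q \in (p, 1-\xi/2)$ and $q/p, nq/np$ are both of order $\Theta_{r,\xi}(1)$, so by Remark~\ref{rem:variance} applied at~$q$ we obtain $\Lambda_q:=\mu_q(1+(nq)^{r-1})=\Theta_{r,\xi}(\Lambda)$ and $\sigma_q^2=\Theta_{r,\xi}(\Lambda_q)=\Theta_{r,\xi}(\Lambda)$. A direct calculation with $\mu_q=n\binom{n-1}{r}q^r$ and a Taylor expansion gives $\mu_q\geq \mu+2t$ (picking $K$ large enough to compensate second-order terms, which are controlled by $t\leq \beta\mu$ with~$\beta$ small). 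Next, let~$E$ denote the number of edges of the random graph. By one-sided Chebyshev, $\Pr_q(X<\mu+t)\leq \Pr_q(X\leq \mu_q-t)\leq \sigma_q^2/t^2=O_{r,\xi}(\Lambda/t^2)$; since $t\geq \sigma$ and $\sigma^2=\Theta_{r,\xi}(\Lambda)$, by choosing~$K$ slightly larger if needed this is at most~$1/8$. A Chernoff bound on $E\sim\Bin(\binom{n}{2},q)$ gives $\Pr_q\bigl(E>(q+\delta)\binom{n}{2}\bigr)\leq 1/8$, using that $(q+\delta)\binom{n}{2}-q\binom{n}{2}=\delta\binom{n}{2}\gg \sqrt{\binom{n}{2}q(1-q)}$ under the hypotheses $t\geq \sigma$ and $p\geq \xi/n$ (this is where $np=\Omega_\xi(1)$ is used). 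Hence the event $A:=\{X\geq \mu+t\}\cap \{E\leq (q+\delta)\binom{n}{2}\}$ satisfies $\Pr_q(A)\geq 3/4$.

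\smallskip

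Now for the change-of-measure step: since $P_p$ and $P_q$ are products of Bernoullis, the likelihood ratio depends only on~$E$,
\[
\log\frac{dP_p}{dP_q}(G) \;=\; -\bigl[E\log(q/p)+(\tbinom{n}{2}-E)\log((1-q)/(1-p))\bigr],
\]
which is linear and decreasing in $E$. Writing $y=E/\binom{n}{2}$, the quantity $y\log(q/p)+(1-y)\log((1-q)/(1-p))$ equals the Bernoulli KL-divergence $D_{\mathrm{KL}}(q\|p)=\delta^2/(2p(1-p))+O(\delta^3/p^2)$ when $y=q$, and has slope $\log\bigl(q(1-p)/(p(1-q))\bigr)=\delta/(p(1-p))+O(\delta^2)$ in~$y$. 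Therefore for $G\in A$, where $y\leq q+\delta$, a Taylor expansion yields
\[
\log\frac{dP_p}{dP_q}(G)\;\geq\;-\tbinom{n}{2}\bigl[D_{\mathrm{KL}}(q\|p)+\delta\cdot \tfrac{\delta}{p(1-p)}\bigr]\cdot(1+o(1))\;\geq\;-C_1\,\tbinom{n}{2}\,\frac{\delta^2}{p(1-p)},
\]
for some $C_1=C_1(r,\xi)$. Substituting $\delta=Ktp/(r\mu)$ and recalling $\mu\asymp n^{r+1}p^r$ and $\Lambda\asymp n^{2r}p^{2r-1}$ (using $np=\Omega_\xi(1)$), we find $\binom{n}{2}\delta^2/(p(1-p))=\Theta_{r,\xi}(t^2/\Lambda)$, so $\log(dP_p/dP_q)(G)\geq -C\,t^2/\Lambda$ uniformly over $G\in A$.

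\smallskip

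Putting these ingredients together,
\[
\Pr_p(X\geq \mu+t)\;\geq\;\int_A \frac{dP_p}{dP_q}\,dP_q\;\geq\;e^{-Ct^2/\Lambda}\,\Pr_q(A)\;\geq\;\tfrac{3}{4}\,e^{-Ct^2/\Lambda},
\]
and the $3/4$ is absorbed into the exponent using that $t\geq\sigma$ forces $t^2/\Lambda=\Omega_{r,\xi}(1)$. This gives the second inequality of \eqref{eq:edges}; the first follows from $\varphi(t/\mu)\mu^2=\Theta(t^2)$ for $t\leq \beta\mu$ with~$\beta$ small (cf.~\eqref{eq:varphi}). The main technical obstacle I anticipate is the bookkeeping in the Taylor expansion of $\log(dP_p/dP_q)$ and matching the resulting cost to $t^2/\Lambda$ with the precise dependence on the constants $r,\xi,K,\beta$; choosing $K$ large before $\beta$ small should keep this clean.
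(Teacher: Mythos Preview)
Your tilting argument is correct and complete in outline. The paper itself only sketches its proof of this lemma, and its primary approach is different: it conditions on the event $\{|E(G_{n,p})| \ge (1+\eps)\tbinom{n}{2}p\}$ with $\eps = t/\mu$, then argues (via comparison with the uniform model~$G_{n,m}$, following Theorem~24 of~\cite{APUT}) that conditionally on this event $X \ge \mu + t$ is typical; the sub-Gaussian cost $\exp(-\Theta_{\xi}(\eps^2 n^2 p)) = \exp(-\Theta_{r,\xi}(t^2/\Lambda))$ comes from the binomial lower bound on~$|E(G_{n,p})|$. The paper does remark in passing that ``a tilting argument also works here,'' and that is exactly what you carry out.

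The two approaches are closely related but technically distinct. The conditioning route makes the cost term immediate but requires analysing the conditional law of~$X$ given~$E$ (effectively working in~$G_{n,m}$). Your change-of-measure route trades this for a direct second-moment computation in~$G_{n,q}$ together with a pointwise bound on the Radon--Nikodym derivative on the event~$A$; the extra work is the Taylor bookkeeping for $D_{\mathrm{KL}}(q\|p)$ and the auxiliary Chernoff control of~$E$ under~$P_q$, both of which you handle correctly. One small point to make explicit in the write-up: in your Chebyshev step you display $\Pr_q(X \le \mu_q - t) \le \sigma_q^2/t^2$, but since $t \ge \sigma$ only yields $\sigma_q^2/t^2 = O_{r,\xi}(1)$, the constant~$K$ must enter through the gap $\mu_q - (\mu + t) \ge (K/2 - 1)t$ (valid once $\beta$ is small relative to~$1/K$), giving the bound $O_{r,\xi}(1/K^2)$; state this so it is transparent why enlarging~$K$ forces the bound below~$1/8$. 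Your stated order of quantifiers (choose~$K$ large first, then~$\beta$ small) is exactly right for this.
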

\begin{remark}\label{rem:edges}
By Remark~\ref{rem:variance}, in inequality~\eqref{eq:edges} we have~$\Lambda = \Theta_{r,\xi}(\sigma^2)$, where $\sigma^2 = \Var X$.
\end{remark}
\noindent 
Setting~$\eps:=t/\mu$, the basic proof idea is to (i)~condition on having~$|E(G_{n,p})| \ge (1+\eps)\binom{n}{2}p$ edges, 
and (ii)~then show that this conditioning converts~$X \ge \mu+t=(1+\eps)\mu$ into a typical event 
(the crux is that this conditioning drives up the expected value of $X=X_{n,r,p}$; to see this it might help to think of the uniform random graph~$G_{n,m}$ with~$m=(1+\eps)\binom{n}{2}p$ edges). 
For the resulting lower bound the conditioning thus ought to give the main contribution, which by folklore results satisfies $\Pr(|E(G_{n,p})| \ge (1+\eps)\binom{n}{2}p) = \exp\bigl(-\Theta_\xi(\eps^2 \binom{n}{2}p))\bigr)$. 
This makes inequality~\eqref{eq:edges} plausible, since~$\eps^2\binom{n}{2}p = \eps^2 \cdot \Theta_{r, \xi}(\mu^2/\Lambda) = \Theta_{r, \xi}(t^2/\Lambda)$ for the considered range of~$p$. 
A simple modification of the proof of Theorem~24 in~\cite{APUT} makes this idea rigorous and establishes~\eqref{eq:edges} above; 
we leave the details to the reader (we mention in passing that a tilting argument also works~here). 

Stitching the above three results together, we now prove the lower bounds of Theorem~\ref{thm:Kri}~and~\ref{thm:Kripc}.  
\begin{proof}[Proof of the lower bound in~\eqref{eq:thm:Kripc} of Theorem~\ref{thm:Kripc}]
By~(iii) of Lemma~\ref{cl:phi_to_t_square} we infer that~$M \ge t^{1/r} =\Omega_{r,\xi}(1)$, which in turn implies $t^2/\sigma^2 \ge M \log(e/p) \cdot (\log n)^{2r} \ge 1$ and thus~$t \ge \sigma$. 
Hence an application of Lemma~\ref{lem:cls:refined} (see inequality~\eqref{eq:cls:refined}) establishes the lower bound in~\eqref{eq:thm:Kripc}. 
\end{proof}
\begin{proof}[Proof of the lower bound in~\eqref{eq:thm:Krip} of Theorem~\ref{thm:Kripc}]
We shall only assume~$p \ge n^{-1}$ instead of~$p \ge n^{-1}\log n$. 
Applying Lemmas~\ref{lem:cls:refined} and~\ref{lem:edges}, and using Remark~\ref{rem:edges}, it follows that there is~$\beta=\beta(r,\xi)>0$ such that 
\begin{equation}\label{eq:thm:Krip:lower:indic}
\Pr(X \ge \mu+t) \ge \max\left\{e^{ -\Theta_{r,\xi}\left( M \log(e/p) \right) }, \; \indic{t \le \beta \mu}e^{ -\Theta_{r,\xi}\left(t^2/\sigma^2\right) }\right\}.
\end{equation}
By a virtually identical calculation as in the proof of~\eqref{eq:no_normal_term} from Lemma~\ref{cl:phi_to_t_square}, 
for~$t \ge \beta \mu$ it follows that~$t^2/\sigma^2 \ge \Omega_{r,\xi}(M \log(e/p))$ holds. 
%The crux is now that, since~$p \ge n^{-1}$ and Remark~\ref{rem:variance} imply~$\sigma^{2} = \Theta_{r,\xi}(\mu (np)^{r-1})$, for~$t \ge \beta \mu$ we have 
%\begin{equation}\label{eq:thm:Krip:lower:indic:calc}
%\frac{M \log(e/p)}{t^2/\sigma^2} = \max\left\{\frac{\sigma^2\log(e/p)}{t^{2-1/r}}, \; \frac{\sigma^2\log(e/p)}{t n^{r-1}}\right\} = O_{r,\xi}\left(\max\left\{\frac{\log n}{n^{1-1/r}}, \; p^{r-1}\log(r/p)\right\}\right) \le \Theta_{r,\xi}(1), 
%\end{equation}
%where we exploited that calculus gives~$p^{r-1} \log(e/p) = O_r(1)$. 
After adjusting the implicit constants, it follows that we can remove the indicator in inequality~\eqref{eq:thm:Krip:lower:indic}, 
which in view of~$\Psi(t) = \min\{t^2/\sigma^2, M\log (e/p)\}$ establishes the lower bound in~\eqref{eq:thm:Krip}. 
\end{proof}
\begin{proof}[Proof of the lower bound in~\eqref{eq:thm:Kri} of Theorem~\ref{thm:Kri}]
Set~$t := \eps \mu$ and~$M := \max\{ t^{1/r}, t/n^{r-1}\}$, as usual.
Using~\eqref{eq:varphi} we have $(\eps \mu)^2/\sigma^2 \ge \varphi(\eps)\mu^2/\sigma^2 \ge \Phi(\eps) \ge 1$ by assumption, so~$t \ge \sigma$ follows. 
In the following we shall distinguish the three cases (i)~$n^{-1} \le p \le 1 - \xi$, (ii)~$n^{-1-1/(r+1)} \le p < n^{-1}$, and (iii)~$0 < p < n^{-1-1/(1+r)}$. 

In cases~(i)--(ii) note that, say,~$\mu^{1-1/r} =\Omega_r(n^{1/3r}) > \log n$ holds. %, with room to spare. 
Using~(i)--(ii) of Lemma~\ref{cl:phi_to_t_square}, it thus suffices to prove the lower bound of~\eqref{eq:thm:Kri} with exponent~$\Phi(\eps)$ replaced by~$\Psi(t)$ defined in \eqref{eq:thm:Krip}.  
In case~(i) this bound follows from the above proof (valid for~$n^{-1} \le p \le 1 - \xi$) of the lower bound in~\eqref{eq:thm:Krip}, 
and in case~(ii) we shall now argue that this bound follows from inequality~\eqref{eq:cls:refined} of Lemma~\ref{lem:cls:refined}, 
by establishing that~$t^2/\sigma^2 = \Omega_{r,\xi}(M \log(e/p))$ holds. 
Indeed, since~$p < n^{-1}$ and Remark~\ref{rem:variance} imply~$\sigma^{2} = \Theta_{r}(\mu)$, 
after recalling $\mu^{1-1/r} =\Omega_r(n^{1/3r})$ and~$t =\eps \mu \ge n^{-\alpha}\mu$ it then follows for~$\alpha=\alpha(r)>0$ sufficiently small~(say,~$\alpha < 1/6r$)~that 
\begin{equation}\label{eq:thm:Krip:lower:indic:calc}
	\frac{t^2/\sigma^2}{M \log(e/p)} = \frac{\min\left\{{t^{2-1/r}}, \; {t n^{r-1}}\right\}}{\sigma^2\log(e/p)} \ge \frac{\min\left\{\mu^{1-1/r}n^{-2\alpha}, \: n^{r-1-\alpha} \right\}}{\Theta_{r,\xi}(\log n)} \ge 1 ,
\end{equation}
completing the proof in cases~(i)--(ii). 

In the remaining case~(iii) Lemmas \ref{lem:cls:refined} and \ref{lem:disj} imply that, for some constant~$d = d(r) \in (0,1]$, we have 
\begin{equation}
	\label{eq:case3}
	\Pr(X \ge \mu + t) \ge d \cdot \max \left\{ e^{ -\Theta_{r,\xi}\left( M \log(e/p) \right) }, \; \indic{\mu + t \le 9\max \{\mu, n^{1/(r+1)}\}} e^{- \Theta_{r,\xi}\left( \varphi(t/\mu)\mu \right) } \right\}.
\end{equation}
We claim that for $\mu + t > 9 \max \left\{  \mu, n^{1/(r+1)}\right\}$ we have $\varphi(t/\mu)\mu = \Omega_{r}\left( M \log(e/p) \right)$. 
Indeed, noting that~$\varphi(x) \ge x(\log x)/2$ for~$x \ge e^2 \approx 7.4$ 
(which is easy to check by calculus), it follows that 
\begin{equation*}%\label{eq:contradiction}
\frac{\varphi(t/\mu) \mu}{M \log(e/p)} \ge \frac{\min\{t^{(r-1)/r}, \: n^{r-1}\} \log(t/\mu)}{2\log(e/p)} = \Omega_r\left(n^{(r-1)/(r^2+r)} \cdot \frac{\log(t/\mu)}{\log(e/p)}\right) .
\end{equation*} 
Furthermore, $\log(t/\mu)/\log(e/p) = \Omega_r(1)$ when~$\mu \le p$, and~$\log(t/\mu)/\log(e/p) = \Omega_r((\log n)^{-1})$ when~$\mu > p$. 
In each case the claimed inequality holds, 
which allows omitting the indicator in~\eqref{eq:case3}. 
Since~$\mu = \Theta_{r}(\mu^2/\sigma^2)$ by Remark~\ref{rem:variance}, 
now~$\Pr(X \ge \mu + t) \ge d \cdot e^{-\Theta_{r,\xi}(\Phi(\eps))}$ follows, 
which in view of~$\Phi(\eps) \ge 1$ completes the~proof. 
\end{proof}

\end{appendix}

\end{document}